\theoremstyle{plain}
\newtheorem{thm}{Theorem}[section]
\newtheorem{lem}[thm]{Lemma}
\newtheorem{lemma}[thm]{Lemma}
\theoremstyle{definition}
\theoremstyle{remark}
\newtheorem*{rem}{Remark}
\newcommand{\e}{\nu}
\newcommand{\rhot}{\widetilde{\rho}}
\newcommand{\rhob}{\overline{\rho}}
\newcommand{\R}{\mathbb{R}}
\begin{document}

\title{Stationary States and Asymptotic Behaviour of Aggregation Models with Nonlinear Local Repulsion}
\author{Martin Burger\thanks{Institut f\"{u}r Numerische und Angewandte Mathematik, Westf\"{a}lische Wilhelms-Universit\"{a}t M\"{u}nster,
Einsteinstr. 62, 48149 M\"{u}nster, Germany. Email: martin.burger@wwu.de}
\and Razvan Fetecau\thanks{Department of Mathematics, Simon Fraser University, 8888 University Dr., Burnaby, BC V5A 1S6, Canada. Email: van@math.sfu.ca}
\and Yanghong Huang\thanks{Department of Mathematics
Imperial College London, London SW7 2AZ, United Kingdom. Email: yanghong.huang@imperial.ac.uk}  }

\maketitle

\begin{abstract} 
We consider a continuum aggregation model with 
nonlinear local repulsion given by a degenerate power-law diffusion with general exponent. The steady states and their properties in one dimension are studied both  analytically and numerically, suggesting that the quadratic diffusion is a critical case. The focus is on finite-size, monotone and compactly supported equilibria. We also investigate numerically the long time asymptotics of the model by simulations of the evolution equation. Issues such as metastability and local/ global stability are studied in connection to the gradient flow formulation of the model.

\end{abstract}


\section{Introduction}
The derivation and analysis of mathematical models for collective behaviour of cells, animals, or humans have been receiving increasing attention in recent years. In particular, a variety of continuum models based on evolution equations for population densities has been derived and used to describe biological aggregations  such as flocks and swarms \cite{Grunbaum:msag,MR1698215,MR2117406,TBL}. A typical aspect of these models is the competition of social interactions (repulsion and attraction) between group individuals, which is also the focus of current research.

In this paper we add novel results on the study of a canonical model for the competition of attraction and repulsion, namely the following one-dimensional aggregation equation for the population density $\rho$:
\begin{equation}
\label{eq:aggeq}
 \partial_t \rho +  \partial_x( \rho \partial_x (G \ast \rho))= \nu \partial_x(\rho \partial_x \rho^{m-1}).
\end{equation}
Here, $G$ is an attractive interaction potential (to be detailed in Section~\ref{sec:defG}), 
 $\nu>0$ is a diffusion coefficient  and $m>1$ is a real exponent. Equation \eqref{eq:aggeq} falls into the general class of aggregation equations with degenerate diffusion in arbitrary dimension $n$:
\begin{equation}
\label{eq:main_evo}
    \partial_t \rho +  \nabla \cdot \big( \rho \nabla (G \ast \rho)\big)= 
\nabla \cdot \big(\rho \nabla f(\rho)\big),
\end{equation}
which has been widely studied in applications such as biological swarms \cite{Burger:Capasso,Burger:DiFrancesco,TBL} or chemotaxis \cite{BeRoBe2011,BlCaLa09}.

The left-hand-side of  \eqref{eq:main_evo} represents the
active transport of the density $\rho$ associated to a non-local velocity 
field $\mathbf{v} = \nabla (G \ast \rho)$. The potential $G$ is assumed 
to incorporate only {\em attractive} interactions among individuals of the 
group, while repulsive (anti-crowding) interactions are accounted for by the nonlinear diffusion in the right-hand-side.  Alternatively, by transferring the nonlinear diffusion to the left-hand-side, one can regard equation \eqref{eq:main_evo} as active transport of $\rho$ that corresponds to a velocity field that has a non-local attractive component, $\nabla (G \ast \rho)$, and a {\em local} repulsion or dispersal part, $- \nabla f(\rho)$. Nonlinear diffusion terms have been suggested in several instances for dispersal and repulsion (cf. e.g. \cite{MR736508} and \cite{MR1698215,TBL} in the above context). From a microscopic point of view, the case $m=2$ ($f(\rho)=\nu \rho$) can be easily justified, by taking the repulsive force modelled by a potential, similar to the aggregative one, and then performing a scaling limit as the interaction range approaches zero (cf. \cite{MR2117406}). In Section 2 we present a microscopic derivation of the model with arbitrary $f$ based on nearest-neighbour interactions for 
the repulsion, which provides a unified interpretation of the nonlinear diffusion in terms of local forces. Models of type \eqref{eq:aggeq} also appear in earlier works on population dynamics \cite{Ikeda1985, IkedaNagai1987,NagaiMimura1983}, but the potentials considered there have different properties than the ones considered in this article.

Regardless of the interpretation, there is a delicate balance between attractive and dispersal effects which results in very interesting (and biologically relevant) dynamics and long-time behaviour of solutions to \eqref{eq:main_evo}.
Well-posedness of solutions to  \eqref{eq:main_evo} has been studied intensively, we refer for instance to  \cite{BertozziSlepcev10,Burger:Capasso,Burger:DiFrancesco}. In particular, the analysis in  \cite{Burger:DiFrancesco} takes advantage of the formulation of these models in terms of gradient flows on spaces of probability measures equipped with the Wasserstein metric (cf. \cite{ambrosiogiglisavare}). Also, a wide literature exists  in relation to the Keller-Segel model for chemotaxis (see \cite{BeRoBe2011,BlCaLa09} and references therein). As pointed out in such works, existence theory is more delicate in the presence of singular kernels, where finite time blow-up of solutions is possible \cite{BeRoBe2011,KozonoSugiyama08}. 
 
Of central role in studies of model \eqref{eq:main_evo}, and also particularly relevant to the present research, is the gradient flow formulation~\cite{ambrosiogiglisavare} of the equation with respect to the energy 
\begin{equation}
    E[\rho]:=  \int_{\mathbb{R}^n} F\big(\rho(x)\big) dx 
- \frac{1}{2}\int_{\mathbb{R}^n} \int_{\mathbb{R}^n}  G(x-y) \rho(y) \rho(x) dy dx,
\label{def:entropyfunctional}
\end{equation}
where $F'(\rho) = f(\rho)$. The special case \eqref{eq:aggeq} corresponds to power-law diffusion
\begin{equation}
\label{eqn:powerfF}
f(\rho)=\nu\rho^{m-1}, \quad F(\rho) = \frac{\nu}{m} \rho^m.
\end{equation}
Stationary states of \eqref{eq:main_evo} are critical points of the energy \eqref{def:entropyfunctional}. A recent work by Bedrossian \cite{Bedrossian} investigates the existence of global minimizers of \eqref{def:entropyfunctional} using calculus of variations techniques. In particular,  the existence of a radially symmetric and non-increasing global minimizer can be inferred for power-law $F(\rho)$ in \eqref{eqn:powerfF} with $m > 2$. The case $m=2$ is critical and yields a global minimizer only for small enough diffusion coefficient $\nu$, where the threshold value for $\nu$ is shown to be $\|G\|_{L^1}$ by Burger {\em et al.} in \cite{BurgerDiFrancescoFranek}. Energy considerations have also been employed in  \cite{Burger:DiFrancesco} to study the large time behaviour of solutions to  \eqref{eq:main_evo} in one dimension.


This paper considers the power-law diffusion \eqref{eqn:powerfF},
though the results are expected to be true for any strictly increasing function
$f(\rho)$. Previous works considered various specific values of the exponent $m$. In \cite{BurgerDiFrancescoFranek} the case $m=2$ is studied and 
stationary solutions with compact support in one spatial dimension are characterized. The study in \cite{BurgerDiFrancescoFranek} makes extensive use of the interplay between energy minimizers of \eqref{def:entropyfunctional} and equilibria of \eqref{eq:aggeq}.
Topaz {\em et al.} \cite{TBL} investigate the case $m=3$ in one and higher dimensions, where the 
focus, again, is the characterization of the steady states. The authors find 
compactly supported steady states with steep edges which they call ``clumps".   

The results from \cite{BurgerDiFrancescoFranek} and \cite{TBL} are the
main motivation for this paper. We investigate the existence of
finite-size, compactly supported stationary states (``clumps") for
general power exponent $m>1$. Such biologically relevant equilibria have also been sought and studied in aggregation models with nonlocal repulsion \cite{BeTo2011,LeToBe2009}.
We show that such equilibria exist for
$m>2$ regardless of the size of the diffusion coefficient $\nu$.
Analytical results and numerical experiments suggest that these steady
states are the global minimizers of the energy, identified by
Bedrossian in \cite{Bedrossian}. However, the convergence in the
aggregation dominated regimes could be
arbitrarily slow, leading to metastable dynamics, as  already observed
in~\cite{TBL}. The case $1<m<2$ is more subtle, as dynamics depends
on the size of $\nu$, as well as on the spread of the initial data.
More precisely, for values of $\nu$ above a certain threshold (which
we can identify in a nonlinear eigenvalue problem), diffusion dominates and spreading to a trivial equilibria occurs. For values of $\nu$ below this value, clumps exist, but have only a limited basin of attraction. As shown in the numerical experiments, the spread of the initial data is important in determining whether the solutions approach 
these stationary states or disperse to 
infinity.

The paper is organized as follows: in Section \ref{sec:model} we
present briefly several derivations of the model \eqref{eq:main_evo}
and discuss its basic properties.
Section \ref{sect:ss-pf} is devoted to rigorous analysis of the
existence of compactly supported stationary solutions
to~\eqref{eq:aggeq}. These stationary states are computed 
with an iterative scheme in Section \ref{sec:numss}. In Section \ref{sect:dynamics},
 the long time  asymptotic behaviours of solutions to the evolution equation \eqref{eq:aggeq}
are investigated numerically.


\section{Model Derivation and Basic Properties} \label{sec:model}

\subsection{Model Derivation}
In the following we discuss two natural derivations of the general model \eqref{eq:main_evo}.

\paragraph{Microscopic derivation.} We consider a system of $N$ particles 
at positions $X_i(t)$, $i=1,\ldots,N$, in one dimension, with two kinds of interactions:
a {\em long-range} attraction, which we assume to be in a standard additive form, and a {\em nearest-neighbour} repulsion. For simplicity we assume that $X_1 < X_2 < \ldots < X_N$. Consider the model
\begin{equation}\label{eq:partsys}
\frac{dX_i}{dt} = - N R'\big(N(X_i-X_{i-1})\big) + N R'\big(N(X_{i+1}-X_i)\big) + \frac{1}N \sum_j G'(X_i-X_j),
\end{equation}
where $R$ denotes the {\em local} repulsion potential and $G$ the {\em global} attraction potential. The repulsive interaction only concerns nearest neighbours and is scaled via $N$ to obtain a meaningful locality. Note that we can also rewrite \eqref{eq:partsys} as a gradient flow for the energy
\[ E[X_1,\ldots,X_N] = \sum_i R(N(X_i-X_{i-1})) - \frac{1}{2N}
\sum_{i} \sum_{j \neq i} G(X_i-X_j). \]

In order to derive a continuum model, consider a discretization $u^N$ of the 
pseudo-inverse of the cumulative density distribution function $F$, with
$u^N(ih,t) = X_i(t)$ and $h = 1/N$. Then~\eqref{eq:partsys} is equivalent to
\begin{multline*}
\partial_t u^N(s,t)  = \frac{1}{h}\left[ -
    R'\left(\frac{u^N(s,t)-u^N(s-h,t)}{h}\right) +
    R'\left(\frac{u^N(s+h,t)-u^N(s,t)}{h}\right)\right] \cr
    + h\sum_j  G'\big(u^N(s,t)-u^N(jh,t)\big).
\end{multline*}
The limit $h \rightarrow 0$  naturally yields 
\begin{equation*}
\partial_t u(s,t)  = \partial_s \big( R'\big(\partial_s u(s,t)\big) \big) + \int_0^1 G'\big(u(s,t)-u(\sigma,t)\big)~d\sigma.
\end{equation*}
This equation for the pseudo-inverse $u=F^{-1}$ 
can be transformed back in a standard way (cf. \cite{Burger:DiFrancesco,MR2062430}) to an equation for the density distribution 
$\rho=\partial_x F = \frac{1}{\partial_s u}$:
\begin{equation}
\partial_t \rho = \partial_x \big( -
\partial_x R'(1/\rho)- \rho \partial_x G*\rho\big). 
\label{eq:micromacro}
\end{equation}

Now we immediately obtain a microscopic interpretation of \eqref{eq:main_evo}, by noticing that 
we can write \eqref{eq:micromacro}  in this form, provided 
$R''\big(1/\rho\big) =   \rho^3 f'(\rho)$, or equivalently,
$-\partial_x R'(1/\rho)=\rho \partial_x f(\rho)$.
In particular, the choice $f(\rho) = \nu \rho^{m-1}$ can be interpreted as the limit of a microscopic nearest-neighbour repulsive potential
$$ R(z) =  \frac{\nu}{m} |z|^{1-m}, $$
or a repulsive force proportional to $z  |z|^{-m-1}$. It is obvious that higher exponents in the nonlinear diffusion correspond to stronger local repulsion, and we also observe an interesting demarcation at $m=2$. For $m<2$ the microscopic repulsion is weak, i.e., it has an integral potential, while it becomes non-integrable for $m\geq 2$. We shall see different properties of the model for $m<2$ and $m>2$ on several instances in our analysis.

\paragraph{Fluid-dynamic derivation.} For an alternative derivation, which is quite standard for nonlinear diffusions, we consider directly the macroscopic compressible Euler equations with linear friction (friction coefficient scaled to one) and an additional nonlocal force, i.e.
\begin{align*}
\partial_t \rho + \nabla \cdot (\rho u ) &= 0 \\
 \partial_t u + u \cdot \nabla u &= - u + \nabla G*\rho - \frac{1}\rho \nabla p(\rho). 
\end{align*}
For a diffusive scaling of macroscopic type, i.e. $\tilde x = \epsilon x$, $\tilde t = \epsilon^2 t$, and $\tilde u = \epsilon^{-1} u$, the left-hand side in the second equation is of order $\epsilon^3$, while the right-hand side is of order $\epsilon$. Hence we 
obtain the leading order terms
\begin{equation*}
\partial_{\tilde t} \rho + \nabla_{\tilde x} \cdot (\rho \tilde u ) = 0,\qquad
  \tilde{u} = \nabla G*\rho - \frac{1}\rho \nabla p(\rho). 
\end{equation*}
and inserting $\tilde{u}$ into the first equation yields
\eqref{eq:main_evo} with the relation $
\rho f'(\rho) = p'(\rho)$.


\subsection{Gradient Flow Structure}
\label{sec:defG}
\textbf{Assumptions on the kernel $G$}. Given the focus of this paper, we list the properties of G only in one dimension. Throughout the paper the interaction kernel $G$ is assumed to satisfy
\begin{enumerate}
  \item $G\geq 0$ and $\mathrm{supp}(G)=\R$,
  \item $G\in W^{1,1}(\R)\cap   C^2(\R\setminus \{ 0\})$,
  \item $G(x)=g(|x|)$ for all $x\in \R$,
  \item $g'(r)<0$ for all $r>0$, $\lim_{r\rightarrow +\infty} g(r) =0$.
\end{enumerate}

Kernel $G$ having infinite support (Assumption 1) means that the attractive interactions are {\em global}. This is an important assumption used to conclude that the support of a steady state is connected. Assumption 2 concerns regularity properties on $G$ which are needed to pass derivatives inside the integral operator $G \ast \rho$, as well as in other estimates. In particular, Sobolev  embeddings imply $G \in L^\infty(\R)\cap C(\R)$.  The assumptions on $G$ could be potentially relaxed, but providing such sharp conditions  is not a purpose of the present paper. Note that pointy potentials such as $e^{-|x|}$ are included in the present theory. Assumption  3 is symmetry (isotropic interactions) and Assumption 4 means that kernel $G$ is purely attractive and interactions decay at infinity.

Using the recently developed techniques on gradient flows in metric spaces, in particular in the space of probability measures equipped with the Wasserstein metric (cf. \cite{ambrosiogiglisavare}), it is straightforward to analyze the well-posedness of the dynamic model. The evolution equation \eqref{eq:main_evo} can be written in the form
\begin{equation}
	\partial_t \rho = \partial_x \Big( \rho \partial_x \frac{\delta E[\rho]}{\delta \rho}\Big), \qquad 
\frac{\delta E[\rho]}{\delta \rho}=f(\rho)-G*\rho,
\end{equation}
which is the standard form for Wasserstein gradient flows~\cite{ambrosiogiglisavare} of the energy~\eqref{def:entropyfunctional}. 

If $ f(\rho)\rho - F(\rho)$  is nondecreasing
 and $G$ satisfies the above regularity assumptions, it is well-known that $E$ is geodesically $\lambda$-convex (cf. \cite{MR1812873}), and the existence and uniqueness of a dynamic solution $\rho \in L^\infty(0,T;{\cal P}(\R))$ follows. 
If $F(\rho)$ is given by \eqref{eqn:powerfF}, the decrease of the entropy yields an a-priori bound for the $L^m$-norm, hence $\rho \in L^\infty(0,T;L^m(\R))$. 


\subsection{Basic Properties of Stationary Solutions} 
\label{sect:statsol}

The central issue in this paper is to understand stationary states of \eqref{eq:aggeq} and their implications for the long time behaviours
of the dynamics. Using the more general form \eqref{eq:main_evo} in one dimension, equilibria $\rho$ satisfy a.e. in $\mathbb{R}$ the equation
\[
\rho \partial_x( f(\rho) - G \ast \rho) =0.
\] 

Due to the gradient flow formulation of the equation, such equilibria are closely linked to critical points of the energy. In particular, minimizers of the energy functional \eqref{def:entropyfunctional} on the manifold of probability measures are stationary solutions. 
We present here a few facts about the equilibria of \eqref{eq:main_evo} which can be derived relatively easily by generalizing results from \cite{BurgerDiFrancescoFranek}. These facts are not directly used in the paper, but they provide strong motivation for our subsequent studies. 
We do not present their proofs, except for the last one (see the Appendix). Below, $\mathcal{P}$ denotes the space of non-negative integrable functions of mass one. The nonlinear diffusion function $f(z)$ is assumed to be monotone, $C^1$ for $z>0$, and $\lim_{z \to 0}  f'(z)z < \infty$.

\begin{itemize} 
\item A stationary solution $\rho \in L^2\cap \mathcal{P}$ of  \eqref{eq:main_evo} is a stationary point for the energy functional $E$ defined in~\eqref{def:entropyfunctional}. 

\item A stationary solution $\rho \in L^2\cap \mathcal{P}$ of  \eqref{eq:main_evo} that has {\em connected} support, satisfies
\begin{equation}
\label{eqn:ss-hd-gen}
    f(\rho(x)) = \int_{\mathrm{supp}[\rho]} G(x-y)\rho(y) dy - C, \qquad \textrm{ for all } x\in \mathrm{supp}[\rho],
\end{equation}
with 
\begin{equation}
\label{eqn:C-gen}
C=-2E[\rho] + \int_{\mathrm{supp}[\rho]} (2 F(\rho)-f(\rho)\rho) dx.
\end{equation}

\smallskip

The two facts above may be shown in fact for general dimension. The remaining ones apply to {\em one} dimension only.
\item A stationary solution of  \eqref{eq:main_evo} in  one dimension has {\em connected} support.
\item A minimizer for the energy \eqref{def:entropyfunctional} under the constraint that the center of mass is zero, is necessarily symmetric and monotonically decreasing on $x>0$ by 
Riesz rearrangement inequality.
\item Consider a stationary solution $\rho$ of \eqref{eq:main_evo} with compact support. Then there exists a symmetric stationary solution $\rhot$ such that
\begin{equation*}
    E[\rhot] = E[\rho].
\end{equation*}
\end{itemize}

For power-law functions  given by \eqref{eqn:powerfF}, equations \eqref{eqn:ss-hd-gen}-\eqref{eqn:C-gen} display an interesting demarcation 
at $m=2$ where $C=-2E[\rho]$.  Indeed, we have 
\[ 2 F(\rho) - f(\rho)\rho = \nu\left(\frac{2}m -1\right) \rho^m, \]
which is positive for $m<2$ and negative for $m > 2$. 

Consider the possibility of a stationary solution with $\mathrm{supp}[\rho]$ having infinite measure.  For $m \geq 2 $ we can conclude from the finiteness of the energy that $\rho \in L^1(\R) \cap L^m(\R)$ and thus also $\rho \in L^{m-1}$. Hence, 
$$ C = {\nu} \rho^{m-1} -\int_{\mathrm{supp}[\rho]} G(x-y)\rho(y) dy \in L^1(\R), $$
which implies $C=0$.  Thus, $E[\rho] < 0$ for stationary solutions with unbounded support. 
In the case $m < 2$ it is not even clear that whether $C >0$, hence there could be stationary solutions with positive value of the entropy.

Motivated by the facts above, we focus in the rest of the paper on analytical and numerical investigations of  equilibria of \eqref{eq:aggeq} that have compact and connected support.


\section{Compactly Supported Stationary States}
\label{sect:ss-pf}

Based on previous works \cite{TBL,BurgerDiFrancescoFranek} and our own numerical investigations, there is a  class of steady states which is particularly relevant and important for the dynamics of \eqref{eq:aggeq}. We refer here to symmetric, monotone and compactly supported steady states.  Note also that solutions to \eqref{eq:aggeq} preserve mass, that is, $\int \rho(x,t) dx$ remains constant during the time evolution. Hence, equilibria can be considered of fixed mass, given by the initial configuration.

We are interested in finding a symmetric (even) density that has finite support $[-L,L]$, vanishes on the boundary of the support, and has unit mass. In mathematical terms we look for a solution of the equation (see \eqref{eqn:ss-hd-gen}):
\begin{equation}
\label{eqn:ss}
\nu \rho(x)^{m-1} = \int_{-L}^{L} G(x-y) \rho(y) dy -C, \qquad x\in [-L,L],
\end{equation}
where $\rho$ vanishes outside $[-L,L]$ and $\int \rho(x) dx =1$.
In particular, the continuity condition $\rho(L)=0$ implies that $C=G*\rho(L) = \int_{-L}^L G(L-y)\rho(y)dy$.

%


In formulating the integral equation for the steady state, we
included the requirement that solutions have unit mass. This is a
natural normalization that is inherited from the dynamics of the
model. However, in the existence of steady states investigated in
this section and for the alternative numerical calculations of the
Bessel potential $G(x)=e^{-|x|}/2$ in Section
\ref{sect:Morse}, it is more convenient to work with the normalization $\rho(0)=1$ instead. 
Hence, depending on the context, we may work with any of the two normalizations, and transfer results from one normalization to the other  using the following simple scaling argument. 

We present the scaling argument for one dimension, but it is straightforward to see that it holds identically in any dimension. 
Note that if  $\rho$ is a solution of the steady state equation \eqref{eqn:ss} that has the desired properties, then
\begin{equation}
\label{eq:normeq}
 \nu \rho(x)^{m-1} = G\ast\rho(x)-G\ast\rho(L),
\end{equation}
and consequently, $\rho_\lambda (x) = \lambda \rho(x)$ satisfies
\begin{equation}
\label{eq:renormeq}
 \nu\lambda^{2-m} \rho_\lambda(x)^{m-1}=
G\ast\rho_\lambda(x)-G\ast\rho_\lambda(L).
\end{equation}
We can choose $\lambda$ to pass from one normalization to another. Note that the support $[-L,L]$ is fixed through the transformation. If $\rho$ in~\eqref{eq:normeq} has unit mass, we take
$\lambda = \rho(0)^{-1}$, and $\rho_\lambda$ in ~\eqref{eq:renormeq} 
has the normalization $\rho_\lambda(0)=1$. On the other hand,
if $\rho$ in~\eqref{eq:normeq} has the normalization
$\rho(0)=1$, then by choosing $\lambda = 
(\int \rho(x)dx)^{-1}$, $\rho_\lambda$ in~\eqref{eq:renormeq}
has unit mass. Note however that, in making this transformation, the diffusion coefficient $\nu$  changes as well.



\subsection{Existence of Stationary Solutions in One Dimension}
\label{subsect:fp}

Using the symmetry of the solution and the fact the $\rho$ vanishes at $x=L$ , equation \eqref{eqn:ss} can be written as
\begin{equation}
\label{eqn:ssG}
\nu \rho(x)^{m-1}(x) = \mathcal{G}_L[\rho](x), \qquad x \in [0,L],
\end{equation}
where
\begin{equation}
\label{eqn:G}
\mathcal{G}_L[\rho](x) = \int_0^L [G(x-y)+G(x+y)-G(L-y)-G(L+y)] \rho(y) dy.
\end{equation}
Case $m=2$ (when the problem becomes {\em linear}) was investigated in \cite{BurgerDiFrancescoFranek}. The approach there was to take a derivative in \eqref{eqn:ssG} and transform the equation into an eigenvalue problem for $\rho'(x)$. The main advantage is that the kernel of the resulting integral operator is positive and enables the use of the Krein-Rutman theorem,
while the kernel of $\mathcal{G}_L$ defined in~\eqref{eqn:G} is not. We follow a similar approach here, in dealing with a general exponent $m$. However, the problem is {\em nonlinear} now, and the methodology from \cite{BurgerDiFrancescoFranek} has to be adapted and significantly extended.

Take a derivative in  \eqref{eqn:ssG} to get: 
\begin{equation}\
\label{eq:ss3}
 \nu (m-1) \rho(x)^{m-2}\rho'(x) =  \mathcal{H}_L[\rho'](x), \qquad x \in [0,L],
\end{equation}
where 
\begin{equation}
\label{eqn:H}
 \mathcal{H}_L[\rho'](x)=\int_0^L [G(x-y)-G(x+y)] \rho'(y) dy.
\end{equation}
Although $\rho'(x)$ can be infinite at $x=L$ (true for $m>2$), $\rho'$ 
is integrable and the equation~\eqref{eq:ss3} is still well-defined. 

In the following we shall prove the existence of compactly supported stationary states with arbitrary support size $L$ by a combination of the Krein-Rutman theorem with fixed point techniques. The first step is to ``freeze" $\rho$ and study the eigenvalue problem \eqref{eq:ss3} in $u=\rho'$. Krein-Rutman theorem (strong version) can be used to argue that a unique nonpositive eigenfunction $u$ exists, and hence we can define a map that takes $\rho$ into $u$. The next step is to consider a second map, which maps $u$ into its primitive, and show that the composition of the two maps has a fixed point. Such a fixed point is a solution of \eqref{eq:ss3}.

We point out the transition at $m=2$ for the study of solutions of \eqref{eq:ss3} performed here. Since $\rho$ vanishes at the boundary, $\rho(x)^{m-2}$ is either zero ($m>2$) or infinity ($m<2$) at $x=L$. Consequently, since the right-hand-side of \eqref{eq:ss3} is finite at $x=L$, $\rho'(L)$ is either infinite ($m>2$) or zero ($m<2$). We will treat the two cases separately.

 
We first state the strong version of the Krein-Rutman theorem used in the arguments.
 
\begin{thm}
[Krein--Rutman Theorem, strong version]\label{thm:KR}
Let $X$ be a Banach space, let $K\subset X$ be a \emph{solid cone}, i.e. such that $\lambda K\subset K$ for all $\lambda \geq 0$ and $K$ has a nonempty interior $K_0$. Let $T$ be a compact linear operator, which is strongly positive with respect to $K$, i.e. $T(K) \subset K_0$.
Then the spectral radius $r(T)$ is strictly positive and $r(T)$ is a simple eigenvalue with an
eigenvector $v \in K_0$. There is no other eigenvalue with a corresponding eigenvector $v \in K$.
\end{thm}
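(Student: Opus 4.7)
The plan is to prove the theorem in the classical three-stage order: positivity of the spectral radius, existence of a positive eigenvector, and simplicity / uniqueness. Since the theorem is stated as a ``strong'' version under the hypothesis $T(K)\subset K_0$, the interior of the cone is doing most of the work, and I will exploit it throughout.

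First I would show $r(T)>0$. Pick any $v_0\in K_0$. Because $K_0$ is open, there is $\delta>0$ with $v_0-\delta w\in K$ for every $w$ in a neighbourhood of $0$; in particular, since $Tv_0\in K_0$, one can find $c>0$ such that $Tv_0-cv_0\in K$. Iterating monotonicity with respect to the order induced by $K$ gives $T^n v_0\succeq c^n v_0$, hence $\|T^n\|\geq c^n \|v_0\|/\|v_0^*\|$ for a suitable dual element, and consequently $r(T)=\lim\|T^n\|^{1/n}\geq c>0$.

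Next I would produce an eigenvector of $T$ in $K$ at eigenvalue $r(T)$. The cleanest route is a Schauder fixed-point argument on the map $\Phi(x)=Tx/\|Tx\|$ restricted to a suitable closed, bounded, convex subset $C\subset K\setminus\{0\}$ (for instance, the intersection of $K$ with a sphere around an interior base point, modified so that $\Phi$ is well-defined and self-mapping). Compactness of $T$ gives compactness of $\Phi(C)$, and $\Phi$ is continuous since $Tx\ne 0$ on $C$ (use strong positivity to rule out $Tx=0$). The fixed point $v\in C$ satisfies $Tv=\mu v$ for some $\mu>0$, and one identifies $\mu=r(T)$ by an approximation/perturbation argument (e.g., applying the same construction to $T+\epsilon T_0$ with $T_0$ positive of rank one, and letting $\epsilon\to 0$) together with the elementary inequality $\mu\leq r(T)$ coming from $T^n v=\mu^n v$. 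Once such a $v\in K$ is found, strong positivity gives $v=Tv/r(T)\in K_0$ immediately.

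For simplicity of $r(T)$, suppose $w$ is another eigenvector at $r(T)$ (I may assume $w$ real by splitting real and imaginary parts, both of which would be eigenvectors). Since $v\in K_0$, the quantity $t_*:=\sup\{t\geq 0 : v-tw\in K\}$ is finite and positive, and $v-t_*w\in\partial K$. If $v-t_*w\neq 0$, strong positivity forces $T(v-t_*w)=r(T)(v-t_*w)\in K_0$, contradicting membership in $\partial K$; therefore $w$ is a scalar multiple of $v$. For the final assertion, that no other eigenvalue has an eigenvector in $K$, apply the construction to the adjoint $T^*$ on the dual cone to obtain a strictly positive linear functional $\ell$ with $T^*\ell=r(T)\ell$; then for any $Tu=\lambda u$ with $u\in K\setminus\{0\}$ one has $\lambda\langle\ell,u\rangle=\langle\ell,Tu\rangle=r(T)\langle\ell,u\rangle$, and $\langle\ell,u\rangle>0$ (since $Tu\in K_0$ and $\ell$ is strictly positive on $K_0$), whence $\lambda=r(T)$.

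The delicate step is the middle one, producing the positive eigenvector at the correct eigenvalue; the fixed-point part is standard but one must carefully rule out degenerate cases (the normalising denominator vanishing, the fixed point sitting at the cone's vertex) and verify that the eigenvalue obtained is indeed the spectral radius rather than a smaller positive eigenvalue, which is where the perturbation or resolvent argument is needed. The remaining steps are short applications of strong positivity via the ``touch the boundary'' trick and the duality pairing.
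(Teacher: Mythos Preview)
The paper does not prove this theorem; it is quoted as a classical result (the strong Krein--Rutman theorem) and used as a black box in the subsequent lemmas. There is therefore no proof in the paper to compare your proposal against.

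As a sketch of the classical argument your outline is largely sound: the lower bound on $r(T)$ via an interior point and a dual functional is standard, and the simplicity and uniqueness arguments via the ``touch the boundary'' trick and the adjoint eigenvector are correct. The weakest link is the middle step. Applying Schauder to $\Phi(x)=Tx/\|Tx\|$ is problematic because the unit sphere in $K$ is not convex, and even after repairing that (e.g.\ by intersecting $K$ with an affine hyperplane $\{\ell=1\}$ for a strictly positive functional $\ell$), the eigenvalue $\mu$ you obtain is a priori only some positive number, not $r(T)$; your perturbation argument to close this gap is too vague as written. The standard route bypasses this difficulty by working with the resolvent: for $\lambda>r(T)$ the operator $R_\lambda=(\lambda I-T)^{-1}=\sum_{n\geq 0}\lambda^{-n-1}T^n$ is positive, one normalises $R_\lambda v_0$ for a fixed $v_0\in K_0$, lets $\lambda\downarrow r(T)$, and uses compactness of $T$ together with divergence of $\|R_\lambda\|$ at the spectral radius to extract a limiting eigenvector at exactly $r(T)$.
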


%
%

 
Another useful tool in the following is a comparison result on the leading eigenvalue, which can be derived easily from the Krein-Rutman theorem:
\begin{lem} \label{comparisonlemma} Let $\mathcal{K}$ be a compact linear integral operator 
 from $C([0,L)) \to C([0,L)) $ with positive continuous kernel and $v$ a positive continuous function on $[0,L)$        such that $\mathcal{K} v \geq \mu v$. Then the leading 
        eigenvalue $\lambda(\mathcal{K})$ (from the Krein-Rutman theorem) is greater or equal $\mu$.
\end{lem}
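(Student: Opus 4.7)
The plan is to iterate the pointwise inequality $\mathcal{K} v \geq \mu v$ and combine it with Gelfand's spectral radius formula, using the Krein--Rutman identification of the leading eigenvalue with the spectral radius, $\lambda(\mathcal{K}) = r(\mathcal{K})$.

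If $\mu \leq 0$ the conclusion is immediate because Krein--Rutman already guarantees $\lambda(\mathcal{K}) > 0$, so assume $\mu > 0$. First I would exploit the positivity of the kernel: $\mathcal{K}$ is a monotone operator on the cone of nonnegative continuous functions, so applying $\mathcal{K}$ to both sides of $\mathcal{K} v \geq \mu v$ and iterating preserves the inequality, giving
\[
\mathcal{K}^n v \;\geq\; \mu^n v \qquad \text{pointwise on } [0,L)
\]
for every $n \geq 1$. Next I would take sup norms on both sides and combine with the operator-norm bound $\|\mathcal{K}^n v\|_\infty \leq \|\mathcal{K}^n\|_{\mathrm{op}} \|v\|_\infty$ to get
\[
\mu^n \|v\|_\infty \;\leq\; \|\mathcal{K}^n v\|_\infty \;\leq\; \|\mathcal{K}^n\|_{\mathrm{op}}\,\|v\|_\infty,
\]
so that $\mu \leq \|\mathcal{K}^n\|_{\mathrm{op}}^{1/n}$. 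Finally, Gelfand's formula yields $\|\mathcal{K}^n\|_{\mathrm{op}}^{1/n} \to r(\mathcal{K}) = \lambda(\mathcal{K})$ as $n\to\infty$, which gives the desired inequality.

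The main technical subtlety is that this argument compares sup norms, so it tacitly uses that $v$ is bounded on $[0,L)$ (implicit in the operator-norm framework $C([0,L))\to C([0,L))$) and that $\|v\|_\infty > 0$ (which follows from $v>0$ at any point). An alternative route that avoids boundedness altogether is to apply the Krein--Rutman theorem to the adjoint $\mathcal{K}^*$ on the space of finite Radon measures, producing a positive eigenmeasure $\phi$ with $\mathcal{K}^*\phi = \lambda(\mathcal{K})\phi$. Pairing $\phi$ with the hypothesis gives $\lambda(\mathcal{K}) \int v\,d\phi = \int \mathcal{K} v\,d\phi \geq \mu \int v\,d\phi$; tightness of $\phi$ and continuity of $v>0$ ensure $\int v\,d\phi > 0$ (mass concentrates on a compact set where $v$ is bounded below), and dividing yields the conclusion.
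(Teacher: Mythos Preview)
Your primary argument via iteration and Gelfand's formula is correct, and it is a genuinely different route from the paper. The paper proceeds exactly along the lines of your \emph{alternative}: it applies Krein--Rutman to the adjoint $\mathcal{K}^*$ (taken in $L^2$, yielding a nonnegative $u^* \in C([0,L])$ with $\mathcal{K}^* u^* = \lambda u^*$), then pairs the hypothesis against $u^*$ to obtain the contradiction
\[
0 < (\mu - \lambda) \int_0^L v\,u^*\,dx \;\leq\; \int_0^L \big[(\mathcal{K}v)\,u^* - v\,(\mathcal{K}^* u^*)\big]\,dx = 0
\]
under the assumption $\lambda < \mu$. Your iteration approach is more elementary in that it avoids invoking Krein--Rutman a second time for the adjoint and sidesteps any discussion of what the dual eigenobject looks like; the trade-off, as you correctly flag, is the implicit reliance on $\|v\|_\infty < \infty$ so that one can divide by it. The paper's duality argument needs instead that $\int v\,u^*$ be finite and positive, which is a comparable mild regularity input. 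Both proofs are short and essentially equivalent in strength for the application at hand.
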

\proof
Assume that the maximal eigenvalue $\lambda$ is smaller than $\mu$. Since $K$ is an integral operator with positive kernel, its $L^2$-adjoint operator is a positive operator as well and hence, the Krein-Rutman theorem also implies the existence of a nonnegative function $u^* \in C([0,L])$ such that 
$$ K^* u^* = \lambda u^*. $$
On the other hand, due to the positivity of $v$ we have
$$ 0 < (\mu-\lambda) \int_0^L v(x) u^*(x)~dx \leq  \int_0^L [(Kv)(x) u^*(x) - v(x)(K^*u^*)(x)]~dx = 0, $$
which is a contradiction.
\endproof

\subsubsection*{Case $m > 2$}

In the case $m > 2$ we rewrite the problem \eqref{eq:ss3} for the derivative in the form 
\begin{equation}
\label{eq:ss4}
 \nu (m-1) \rho'(x) = \rho(x)^{2-m} \mathcal{H}_L[\rho'](x), \qquad x \in [0,L].
\end{equation}
In order to avoid potential issues with dividing by zero (due to $\rho(L)=0$) we start with a regularized problem of the form
\begin{equation}\
\label{eq:ss5}
 \nu (m-1) \rho'(x) = (\rho(x)+\delta)^{2-m} \mathcal{H}_L[\rho'](x), \qquad x \in [0,L].
\end{equation}

Our final goal is to prove the existence of a solution $\rho$ in the following subset of $C([0,L])$, the set of continuous functions on $[0,L]$:
\begin{equation}
\label{setD}
	{\cal D} = \big\{ \rho \in C([0,L])\mid \rho \text{ nonnegative and nonincreasing}, \rho(0)=1, \rho(L)=0 \big\}.
\end{equation}
Note that ${\cal D}$ is a bounded set, since any $\rho \in {\cal D}$ attains its maximum at $x=0$ and hence $\Vert \rho \Vert_{C([0,1])} = 1$.

We first define the map that takes $\rho$ to the leading eigenfunction of the eigenvalue problem
\begin{equation} \label{eq:eigenvaleu1}
	\lambda u = (\rho+\delta)^{2-m} \mathcal{H}_L[u].
\end{equation}
The existence and uniqueness of the leading eigenvalue follows ideas from \cite{BurgerDiFrancescoFranek}, where the strong version of the Krein-Rutman theorem was used to study the eigenvalue problem. 

A major difficulty is that we cannot simply work in the space $C([0,L])$ and the cone of nonpositive continuous functions, i.e. 
$	\tilde K = \{u \in C([0,L]) ~|~u \leq 0 \}$,
 since $v=(\rho+\delta)^{2-m} \mathcal{H}_L[u]$ always vanishes at $x=0$ and hence it is not in the interior of $\tilde K$. To avoid this issue one can work instead with the cone of nonpositive functions in the subspace of  $C^1([0,L])$ defined by functions with vanishing left boundary value. The control of the derivative and the favourable properties of the operator (which yields a definite sign of the derivative at $0$) help to apply the strong version of the Krein-Rutman theorem. More precisely we 
use (see also \cite{BurgerDiFrancescoFranek} for further discussion) the space
$$X:=\{u \in C^1[0,L]\mid u(0)=0\},$$ 
with the cone 
$$K:=\{u \in X\mid u \leq 0 \}.$$ 
As in \cite{BurgerDiFrancescoFranek} one can see that $v$ is now in the set
$$ H:= \{ v \in X ~|~ v'(0) < 0, v(x) < 0 \text{ for } x > 0 \}, $$
which is a subset of the interior of the cone $K$. Then we can apply the Krein-Rutman theorem to obtain:
 
\begin{lemma} \label{kreinrutmanlemma1}
For given $\rho \in C^1([0,L])\cap {\cal D}$ and $\delta > 0$ there exists a unique maximal eigenvalue $\lambda > 0$ and a unique nonpositive $u \in C^1([0,L])$ with $u(0)=0$ and  $\int_0^L u(x)~dx = -1$ satisfying \eqref{eq:eigenvaleu1}. Moreover, 
\begin{equation}
\label{eqn:lwb}
	\lambda \geq (1+\delta)^{2-m}  \mu,
\end{equation}
where $\mu$ is the maximal eigenvalue of the positive linear operator ${\cal H}_L$.
\end{lemma}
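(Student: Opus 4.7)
The plan is to cast \eqref{eq:eigenvaleu1} as a fixed-eigenvalue problem for the linear operator
\[
T_\rho[u](x) := (\rho(x)+\delta)^{2-m}\,\mathcal{H}_L[u](x)
\]
acting on the Banach space $X=\{u\in C^1([0,L]):u(0)=0\}$ and to apply the strong Krein--Rutman theorem (Theorem~\ref{thm:KR}) to the cone $K=\{u\in X:u\leq 0\}$, whose nonempty interior contains the set $H=\{v\in X: v'(0)<0,\ v(x)<0 \text{ for } x>0\}$ of \cite{BurgerDiFrancescoFranek}. The lower bound on $\lambda$ will then be extracted afterwards from the comparison lemma (Lemma~\ref{comparisonlemma}), by testing $T_\rho$ against the Krein--Rutman eigenfunction of $\mathcal{H}_L$.

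First I would verify that $T_\rho$ is a well-defined compact linear operator from $X$ into itself. The kernel $G(x-y)-G(x+y)$ of $\mathcal{H}_L$ vanishes identically on $\{x=0\}$ because $G$ is even, so $T_\rho[u](0)=0$ and $T_\rho[u]\in X$ whenever $u\in X$. Since $G\in W^{1,1}(\mathbb{R})\cap C^2(\mathbb{R}\setminus\{0\})$, a direct differentiation under the integral sign bounds $T_\rho[u]$ in $C^1$ in terms of $\|u\|_{C([0,L])}$ and the $C^1$-norm of the multiplier $(\rho+\delta)^{2-m}$ (which is well defined since $\rho+\delta\geq \delta>0$ and $\rho\in C^1$); an Arzel\`a--Ascoli argument on bounded sequences then gives compactness on $X$.

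Next I would establish strong positivity, $T_\rho(K\setminus\{0\})\subset H$. For $u\in K$ with $u\not\equiv 0$ and any $x,y>0$, strict monotonicity of $g$ on $(0,\infty)$ and $|x-y|<x+y$ give $G(x-y)-G(x+y)>0$, so $\mathcal{H}_L[u](x)<0$ for every $x\in(0,L]$; since $(\rho+\delta)^{2-m}>0$, the first half of the definition of $H$ holds. For the boundary condition $v'(0)<0$, differentiating under the integral at $x=0$ yields
\[
v'(0)=(1+\delta)^{2-m}\int_0^L \bigl(G'(-y)-G'(y)\bigr)u(y)\,dy=-2(1+\delta)^{2-m}\int_0^L G'(y)\,u(y)\,dy,
\]
which is strictly negative because $g'(y)=G'(y)<0$ on $(0,L)$ and $u\leq 0$ with $u\not\equiv 0$. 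Thus $v=T_\rho[u]\in H$, and Theorem~\ref{thm:KR} yields a simple leading eigenvalue $\lambda>0$ with a unique eigenvector in $K_0$; rescaling it so that $\int_0^L u\,dx=-1$ produces the claimed $u$.

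Finally, for the inequality $\lambda\geq (1+\delta)^{2-m}\mu$, let $w_\mu\geq 0$ be the Krein--Rutman eigenfunction of $\mathcal{H}_L$ corresponding to its maximal eigenvalue $\mu$; it lies in $X$ automatically since $\mathcal{H}_L[w_\mu](0)=0$, and $w_\mu>0$ on $(0,L]$ by the kernel positivity argument above. Because any $\rho\in\mathcal{D}$ satisfies $\rho\leq 1$ and $m>2$ gives $2-m<0$, we have $(\rho(x)+\delta)^{2-m}\geq(1+\delta)^{2-m}$ pointwise, so
\[
T_\rho[w_\mu](x)=(\rho(x)+\delta)^{2-m}\,\mu\,w_\mu(x)\geq (1+\delta)^{2-m}\mu\,w_\mu(x).
\]
Applying Lemma~\ref{comparisonlemma} to $T_\rho$ with the test function $w_\mu$ (whose positive continuous kernel and positivity on $(0,L]$ are both in place) gives $\lambda=\lambda(T_\rho)\geq(1+\delta)^{2-m}\mu$, completing the proof. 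The step I expect to be the genuine obstruction is the strong-positivity verification at $x=0$: one must exploit both that $\mathcal{H}_L$ kills point values there and that its $x$-derivative at $0$ reproduces a strictly signed operator, in order to land inside $H$ rather than merely on the boundary of $K$; this is precisely what distinguishes the present $C^1$-cone setup from the naive $C([0,L])$ formulation.
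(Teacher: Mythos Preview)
Your proposal is correct and follows essentially the same route as the paper: apply the strong Krein--Rutman theorem to $T_\rho=(\rho+\delta)^{2-m}\mathcal{H}_L$ on the $C^1$-subspace $X$ with the cone $K$ of nonpositive functions, then obtain the lower bound \eqref{eqn:lwb} by testing against the principal eigenfunction of $\mathcal{H}_L$ via Lemma~\ref{comparisonlemma}. The paper's proof is terser, deferring the verification of compactness and strong positivity (your $v'(0)<0$ computation) to \cite{BurgerDiFrancescoFranek}, whereas you spell these out explicitly; otherwise the arguments coincide.
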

\begin{proof}
The existence and uniqueness of a maximal eigenvalue with eigenfunction $u$ as above follows from Theorem \ref{thm:KR} using the cone $K$.  The eigenfunction $u$ has been normalized to have integral $-1$.

In order to obtain a lower bound on $\lambda$ we employ Lemma \ref{comparisonlemma} with $\mu$ being the principal eigenvalue of $\mathcal{H}_L$, which exists and is positive due to the Krein-Rutman theorem, and $v$ an associated nonnegative eigenfunction. Then
$$ \mathcal{K}v = (\rho + \delta)^{2-m} \mathcal{H}_L[v]= (\rho+ \delta)^{2-m} \mu v
\geq (1+\delta)^{2-m} \mu v. $$
Hence, we conclude
$$ \lambda \geq (1+\delta)^{2-m} \mu.$$
\end{proof}

Note that with the lower bound on $\lambda$ we can estimate the supremum norm of the nonpositive eigenfunction $u$ using its normalized $L^1$-norm. Indeed,
$$  |u(x)| \leq \frac{\lambda}{(1+\delta)^{2-m} \mu}  |u(x)| =  \frac{(\rho + \delta)^{2-m}}{(1+\delta)^{2-m} \mu}  {\cal H}_L[-u] \leq \frac{\delta^{2-m}}{(1+\delta)^{2-m} \mu}   2\max_{z \in \mathbb{R}} |G(z)| =: C_\delta. $$

Due to the setup of the Krein-Rutman theorem we have to assume $\rho \in C^1([0,L])$, which in turn affects the setup of the fixed-point argument, as it now needs to be carried out in $C^1([0,L])$ rather than $C([0,L])$. Note that $C^1([0,L])$ is counter-intuitive at a first glance since we expect stationary solutions of the original problem to have infinite derivative at $x=L$ in the case $m>2$. However, the derivative will be finite for positive $\delta$ used in the fixed point argument, which is also encoded somehow in the fact that $C_\delta \rightarrow \infty$ as $\delta \rightarrow 0$. In the limiting procedure $\delta \rightarrow 0$ we will subsequently use a limit of solutions in ${\cal D}$ and convergence in $C([0,L])$.

We construct a fixed-point operator in two steps on the convex set
\begin{equation}
	{\cal D}_\delta = \big\{ \rho \in C^1([0,L]) \mid \Vert \rho' \Vert_{C([0,1])} \leq C_\delta, \rho \text{ nonnegative and nonincreasing}, \rho(0)=1, \rho(L)=0 \big\}.
	\label{setD1}
\end{equation}
Note that ${\cal D}_\delta \subset {\cal D}$. For given $\rho \in \mathcal{D}_\delta$ define
\begin{equation*}
 \label{def:F1}
	\begin{array}{lrcl}{\cal F}_1: & \mathcal{D}_\delta & \rightarrow & C^1([0,L]) \\ 
	             & \rho & \mapsto & u, \end{array}
\end{equation*}
where $u$ is the unique \emph{nonpositive}  eigenfunction for the leading eigenvalue of \eqref{eq:eigenvaleu1}, which satisfies
 $\int_0^L u(x)~dx = -1$.
Then consider the map
\begin{equation} 
\label{def:F2}
	\begin{array}{lrcl}{\cal F}_2: &  C^1([0,L]) & \rightarrow & C^1([0,L]) \\ 
	             & u & \mapsto & \rho, \end{array}
\end{equation}
with 
\begin{equation}
\label{eqn:rhoint}
	\rho(x) = - \int_x^L u(y)~dy.  
\end{equation}
Therefore, by the normalization of $u$ and the above estimate on its supremum norm, 
solutions of \eqref{eq:ss5} are fixed points of the map ${\cal F}_2 \circ {\cal F}_1$. 



We now investigate the properties of the two maps ${\cal F}_1$ and $ {\cal F}_2$.
\begin{lemma} \label{F1lemma}
The operator ${\cal F}_1: {\cal D}_\delta \rightarrow C^1([0,L])$ is well-defined and continuous. Moreover, 
$u={\cal F}_1(\rho)$ is nonpositive with $u(0)=0$ and satisfies $\int_0^L u(x)~dx = -1$.
\end{lemma}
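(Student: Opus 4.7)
The plan is to reduce well-definedness to the Krein--Rutman result already established in Lemma \ref{kreinrutmanlemma1}, and then obtain continuity through a stability argument for the leading eigenpair.

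For well-definedness, every $\rho\in\mathcal{D}_\delta$ lies in $C^1([0,L])\cap\mathcal{D}$, so Lemma \ref{kreinrutmanlemma1} furnishes a unique maximal eigenvalue $\lambda=\lambda(\rho)>0$ and a unique nonpositive eigenfunction $u\in C^1([0,L])$ with $u(0)=0$ and $\int_0^L u(x)\,dx=-1$; the properties claimed in the conclusion of the lemma are then automatic. The estimate just after Lemma \ref{kreinrutmanlemma1} gives $\|u\|_{C([0,L])}\leq C_\delta$, and differentiating the eigenvalue identity $u=\lambda^{-1}(\rho+\delta)^{2-m}\mathcal{H}_L[u]$ while using $\rho+\delta\geq\delta$ together with $G\in C^2(\R\setminus\{0\})$ produces a uniform bound on $\|u'\|_{C([0,L])}$ over $\mathcal{D}_\delta$. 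Hence $\mathcal{F}_1(\mathcal{D}_\delta)$ is in fact a bounded subset of $C^1([0,L])$.

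For continuity, I would take $\rho_n\to\rho$ in $C^1([0,L])$ with $\rho_n\in\mathcal{D}_\delta$, set $u_n:=\mathcal{F}_1(\rho_n)$ and let $\lambda_n$ denote the associated eigenvalues. Lemma \ref{kreinrutmanlemma1} already gives the uniform lower bound $\lambda_n\geq(1+\delta)^{2-m}\mu$, while an upper bound follows by testing the eigenvalue equation at a point where $|u_n|\geq 1/L$ (such a point exists since $\int_0^L|u_n|=1$) and using $(\rho_n+\delta)^{2-m}\leq\delta^{2-m}$ together with $\|\mathcal{H}_L[u_n]\|_{C([0,L])}\leq 2\|G\|_\infty$. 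Along a subsequence $\lambda_{n_k}\to\lambda_\ast>0$, and since $\{u_{n_k}\}$ is uniformly bounded in $C^1([0,L])$, the Arzel\`a--Ascoli theorem yields a further subsequence (not relabelled) with $u_{n_k}\to u_\ast$ in $C([0,L])$. Plugging back into
\[
u_{n_k}=\lambda_{n_k}^{-1}(\rho_{n_k}+\delta)^{2-m}\mathcal{H}_L[u_{n_k}],
\]
and using both that $(\rho_{n_k}+\delta)^{2-m}\to(\rho+\delta)^{2-m}$ in $C^1([0,L])$ (the base stays bounded below by $\delta$) and that $\mathcal{H}_L[u_{n_k}]\to\mathcal{H}_L[u_\ast]$ in $C^1([0,L])$ by dominated convergence applied to the differentiated kernel (allowed since $G\in C^2(\R\setminus\{0\})$), the convergence $u_{n_k}\to u_\ast$ would be upgraded from $C^0$ to $C^1$.

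Passing to the limit in the eigenvalue relation and in the normalization conditions identifies $u_\ast$ as the unique normalized nonpositive eigenfunction associated to $\rho$, hence $u_\ast=\mathcal{F}_1(\rho)$ by the uniqueness asserted in Lemma \ref{kreinrutmanlemma1}. Since every subsequence of $\{u_n\}$ has a sub-subsequence converging to the same limit, the full sequence converges in $C^1([0,L])$, proving continuity. The principal obstacle I anticipate is precisely this upgrade from $C^0$ to $C^1$ convergence: it hinges on both the smoothing action of $\mathcal{H}_L$ and on the uniform lower bound $\rho_n+\delta\geq\delta>0$, which is exactly why the regularisation $\delta>0$ was introduced and why the argument cannot be carried over verbatim to the limiting case $\delta=0$.
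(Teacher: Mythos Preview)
Your argument is correct and complete. The well-definedness part is identical to the paper's, but your continuity proof takes a genuinely different route. The paper symmetrises the problem via $v=u(\rho+\delta)^{m/2-1}$ so that the eigenvalue problem becomes one for a self-adjoint operator $\mathcal{K}_{\rho,L}$; it then invokes local Lipschitz dependence of the leading eigenvalue on the kernel and applies the Fredholm alternative (the right-hand side being orthogonal to the one-dimensional null-space) to obtain continuity of the eigenfunction, finally differentiating the eigenvalue relation to reach $C^1$. You instead run a compactness argument: uniform $C^1$ bounds plus Arzel\`a--Ascoli give a $C^0$-limit, the eigenvalue identity $u=\lambda^{-1}(\rho+\delta)^{2-m}\mathcal{H}_L[u]$ upgrades this to $C^1$ convergence (using that $\mathcal{H}_L:C^0\to C^1$ is bounded since $G\in W^{1,1}$), and the strong Krein--Rutman theorem pins down the limit uniquely. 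Your approach is more elementary---it avoids the symmetrisation trick and the Fredholm alternative entirely---while the paper's route yields the slightly stronger local Lipschitz continuity, which is however not needed for the subsequent Schauder argument.

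Two small remarks. First, when you justify $\mathcal{H}_L[u_{n_k}]\to\mathcal{H}_L[u_\ast]$ in $C^1$, the relevant hypothesis is really $G'\in L^1$ (from $G\in W^{1,1}$) rather than $G\in C^2(\R\setminus\{0\})$; the latter alone does not control $\int|G'|$. Second, identifying $\lambda_\ast$ as the \emph{maximal} eigenvalue uses the last clause of Theorem~\ref{thm:KR} (``there is no other eigenvalue with eigenvector in $K$''), not just the existence statement of Lemma~\ref{kreinrutmanlemma1}; you implicitly use this, and it is worth making explicit.
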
\proof
From Lemma \ref{kreinrutmanlemma1} we obtain the well-definedness and the properties of $u$. It remains to verify the continuity of $\mathcal{F}_1$. To do so we symmetrize the eigenvalue problem by the transform
$ v = u (\rho + \delta)^{m/2-1}$, which is obviously continuous on $C^1([0,L])$. Hence, it suffices to prove the continuity of the map $\rho \mapsto v$. Then $\lambda$ is the unique leading eigenvalue of the self-adjoint operator
\begin{equation}
	{\cal K}_{\rho,L}[v] = (\rho+\delta)^{1-m/2} {\cal H}_L[(\rho+\delta)^{1-m/2}v],
\end{equation}
which depends on the integral kernel, and consequently on $\rho$, in a locally Lipschitz continuous way. 

For given $\rho_1$ and $\rho_2$, and $(\lambda_1,v_1)$, $(\lambda_2,v_2)$ eigenpairs of  ${\cal K}_{\rho_1,L}$, ${\cal K}_{\rho_2,L}$, respectively, we further have
\begin{equation*}
	\lambda_1 (v_1 - v_2) - {\cal K}_{\rho_1,L}[v_1-v_2] = (\lambda_2 - \lambda_1) v_2 + ({\cal K}_{\rho_1,L}[v_2]- {\cal K}_{\rho_2,L}[v_2]).
\end{equation*}
Since the right-hand side is orthogonal to each element in the null-space of the operator $\lambda_1 - {\cal K}_{\rho_1,L}$ (which consists only of multiples of $v_1$), the Fredholm alternative implies the existence of a unique solution $v_1-v_2$, which depends continuously on the right-hand side (also in the supremum norm, cf. \cite{Engl}). Hence we have for some constant $C$ depending on $\rho_1$,
$$ \Vert v_1 - v_2 \Vert_\infty \leq C |\lambda_1 - \lambda_2| \Vert v_2 \Vert_\infty 
+ C \Vert {\cal K}_{\rho_1,L}[v_2]- {\cal K}_{\rho_2,L}[v_2] \Vert_\infty.  $$
For the first term on the right-hand-side we already know the local Lipschitz continuity on $\rho$, while the local Lipschitz continuity of the second term is a straightforward computation.
Finally, we can differentiate the eigenvalue equation with respect to $x$ and obtain an estimate in the norm of $C^1([0,L])$ in an analogous way.
\endproof

\begin{lemma} \label{F2lemma}
The linear operator ${\cal F}_2$ is well-defined by \eqref{def:F2} and compact. Moreover, each function $u$ with $\int_0^L u(x)~dx = -1$ and $u(x) < 0$ for $x \in (0,L]$ maps to a function $\rho \in {\cal D}_\delta$.
\end{lemma}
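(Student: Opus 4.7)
The proof breaks naturally into two independent pieces: (a) show ${\cal F}_2$ is a well-defined bounded linear operator on $C^1([0,L])$ that is compact, and (b) verify the five defining conditions of ${\cal D}_\delta$ for $\rho={\cal F}_2(u)$ when $u$ satisfies the stated normalization and sign conditions. Linearity of ${\cal F}_2$ is clear from \eqref{eqn:rhoint}, so the real content is regularity/compactness in (a) and a checklist computation in (b).

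For (a), the plan is to exploit the fact that ${\cal F}_2$ is essentially antidifferentiation, hence gains one derivative. By the fundamental theorem of calculus, if $u\in C^1([0,L])$ then $\rho(x)=-\int_x^L u(y)\,dy$ lies in $C^2([0,L])$ with $\rho'=u$ and $\rho''=u'$, and the elementary estimates
\begin{equation*}
\|\rho\|_\infty \leq L\,\|u\|_\infty,\qquad \|\rho'\|_\infty=\|u\|_\infty,\qquad \|\rho''\|_\infty=\|u'\|_\infty
\end{equation*}
show that ${\cal F}_2$ is bounded as a map $C^1([0,L])\to C^2([0,L])$. Composing with the compact embedding $C^2([0,L])\hookrightarrow C^1([0,L])$ (a direct Arzel\`a--Ascoli argument: any bounded family in $C^2$ is uniformly equicontinuous together with its first derivatives) yields compactness of ${\cal F}_2$ on $C^1([0,L])$.

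For (b), I would check each condition in the definition \eqref{setD1} of ${\cal D}_\delta$ in turn. The boundary values are immediate: $\rho(L)=0$ by construction, and the normalization $\int_0^L u\,dy=-1$ gives $\rho(0)=-\int_0^L u(y)\,dy=1$. Since $\rho'=u\leq 0$ on $[0,L]$, $\rho$ is nonincreasing; combined with $\rho(L)=0$ this forces $\rho\geq 0$ throughout $[0,L]$ (alternatively, $\rho(x)=\int_x^L(-u(y))\,dy\geq 0$ directly from $u<0$ on $(0,L]$).

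The only delicate point is the derivative bound $\|\rho'\|_\infty\leq C_\delta$. Since $\rho'=u$, this reduces to $\|u\|_\infty\leq C_\delta$, and this is precisely the a priori bound on eigenfunctions of \eqref{eq:eigenvaleu1} derived immediately after Lemma \ref{kreinrutmanlemma1} from the lower bound $\lambda\geq (1+\delta)^{2-m}\mu$ on the principal eigenvalue together with the normalization $\int_0^L u=-1$. So the statement should be read as applying to the $u$ produced by ${\cal F}_1$; once that uniform bound is in hand, membership in ${\cal D}_\delta$ follows, completing the proof. I do not expect any genuine obstacle here---the only point to watch is that ${\cal D}_\delta$ was defined precisely so that the output of ${\cal F}_1$ (which carries the $C_\delta$ bound) feeds back into its own domain under the composition ${\cal F}_2\circ{\cal F}_1$.
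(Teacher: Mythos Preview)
Your proof is correct and is precisely the detailed version of what the paper records in one line (``immediate, using \eqref{eqn:rhoint} and \eqref{setD1}''). Your observation about the $C_\delta$ bound is also well taken: as literally stated, the hypotheses $\int_0^L u=-1$ and $u<0$ on $(0,L]$ do not by themselves imply $\|u\|_\infty\le C_\delta$, so the second assertion of the lemma really should be read for the eigenfunctions produced by ${\cal F}_1$, which carry that bound from the estimate following Lemma~\ref{kreinrutmanlemma1}; this is exactly what is needed for the Schauder argument and is clearly the intended meaning.
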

\proof
The proof is immediate, using \eqref{eqn:rhoint} and \eqref{setD1}.
\endproof

Putting these properties together we can show:
\begin{thm}
For each $\delta > 0$ sufficiently small and $L >0$, there exists a solution $\nu_\delta > 0$ and $\rho_\delta \in {\cal D}_\delta \subset {\cal D}$ of \eqref{eq:ss5}.
\end{thm}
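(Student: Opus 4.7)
My plan is to apply Schauder's fixed point theorem to the composition $\mathcal{F}_2\circ\mathcal{F}_1$ on the set $\mathcal{D}_\delta$, since Lemmas \ref{F1lemma} and \ref{F2lemma} are tailored precisely to yield a compact continuous self-map. A fixed point $\rho_\delta\in\mathcal{D}_\delta$ will be of the form $\rho_\delta(x)=-\int_x^L u_\delta(y)\,dy$, where $u_\delta=\mathcal{F}_1(\rho_\delta)$ is the normalized nonpositive leading eigenfunction of \eqref{eq:eigenvaleu1} at some eigenvalue $\lambda_\delta>0$. Since then $\rho_\delta'=u_\delta$, the eigenvalue equation becomes $\lambda_\delta\,\rho_\delta'(x)=(\rho_\delta(x)+\delta)^{2-m}\mathcal{H}_L[\rho_\delta'](x)$, which is exactly \eqref{eq:ss5} with $\nu_\delta:=\lambda_\delta/(m-1)$. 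Positivity of $\nu_\delta$ is immediate from the lower bound $\lambda_\delta\geq(1+\delta)^{2-m}\mu>0$ in Lemma \ref{kreinrutmanlemma1}.

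To invoke Schauder, I will verify the three standard ingredients. First, $\mathcal{D}_\delta$ is a convex, closed, bounded subset of $C^1([0,L])$; it is nonempty provided $C_\delta\geq 1/L$ (so, e.g., $1-x/L\in\mathcal{D}_\delta$), which holds once $\delta$ is small enough since $C_\delta\to\infty$ as $\delta\to 0$ in the regime $m>2$. This is the only role of the phrase ``$\delta$ sufficiently small''. Second, the self-mapping property: given $\rho\in\mathcal{D}_\delta$, Lemmas \ref{kreinrutmanlemma1} and \ref{F1lemma} produce $u=\mathcal{F}_1(\rho)$ with $u\leq 0$, $u(0)=0$, $\int_0^L u=-1$, and the \emph{a priori} bound $\|u\|_\infty\leq C_\delta$ that is established just before Lemma \ref{F1lemma}; then $\mathcal{F}_2(u)$ is nonnegative, nonincreasing, equals $1$ at $0$ and $0$ at $L$, and its derivative is $u$ with $\|u\|_\infty\leq C_\delta$, so $(\mathcal{F}_2\circ\mathcal{F}_1)(\rho)\in\mathcal{D}_\delta$. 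Third, $\mathcal{F}_1$ is continuous on $\mathcal{D}_\delta\subset C^1([0,L])$ by Lemma \ref{F1lemma}, and $\mathcal{F}_2$ is linear and compact by Lemma \ref{F2lemma}; hence the composition is a compact continuous self-map and Schauder yields a fixed point $\rho_\delta$.

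The substantive difficulties have already been absorbed into the preceding lemmas, so the proof of the theorem is mostly bookkeeping of bounds. The one delicate point to watch is that the radius $C_\delta$ defining $\mathcal{D}_\delta$ must be \emph{exactly} the constant coming out of combining the Krein--Rutman lower bound on $\lambda$ with the estimate $\|\mathcal{H}_L[u]\|_\infty\leq 2\|G\|_\infty\|u\|_{L^1}=2\|G\|_\infty$; any smaller choice would break the self-mapping. This is also what prevents sending $\delta\to 0$ directly at this stage: $C_\delta$ blows up, consistent with the expected singularity $\rho'(L)=-\infty$ of the unregularized problem for $m>2$, which must instead be recovered afterwards by a separate compactness argument on the family $\{\rho_\delta\}$ in $C([0,L])$.
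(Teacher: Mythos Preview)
Your proposal is correct and follows essentially the same approach as the paper: apply Schauder's fixed-point theorem to the compact continuous self-map $\mathcal{F}_2\circ\mathcal{F}_1$ on the convex bounded set $\mathcal{D}_\delta$, and then read off $\nu_\delta=\lambda_\delta/(m-1)$ from the Krein--Rutman eigenvalue. Your additional observation that nonemptiness of $\mathcal{D}_\delta$ requires $C_\delta\geq 1/L$, hence $\delta$ sufficiently small, is a nice clarification of the hypothesis that the paper leaves implicit.
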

\proof
A density $\rho_\delta \in {\cal D}_\delta$ satisfies \eqref{eq:ss5} if it is a fixed-point of the map ${\cal F}_2 \circ  {\cal F}_1$, i.e.,
\begin{equation*}
	\rho_\delta = {\cal F}_2 ( {\cal F}_1(\rho_\delta)).  
\end{equation*}
From Lemma \ref{F1lemma} and \ref{F2lemma} we conclude that ${\cal F}_2 \circ {\cal F}_1$ is continuous, compact, and maps the convex and bounded set ${\cal D}_\delta$ into itself. Thus, the assumptions of the Schauder fixed-point theorem are satisfied and we conclude the existence of a solution $\rho_\delta \in {\cal D}_\delta$. The existence of $\nu_\delta >0 $ then comes from the fact that
${\cal F}_1(\rho_\delta)$ is a Krein-Rutman eigenfunction of \eqref{eq:eigenvaleu1} and $\nu_\delta$ is obtained from the spectral radius: $\nu_\delta (m-1) = \lambda$. Thus, we obtain for each $\delta$ a solution $\rho_\delta \in {\cal D}_\delta$. 
\endproof

The final task remaining is to send $\delta$ to zero. 
 We proceed in several steps.  As mentioned above, the derivative $\rho_\delta'$ approaches infinity at the boundary $x=L$ as $\delta \to 0$ and hence is not suitable for performing the limit.
For this reason we first integrate \eqref{eq:ss5} to write the eigenvalue problem in terms of $\rho_\delta$ 
only. We write \eqref{eq:ss5} as
\begin{align}\label{eq:mdev1}
    \nu_\delta \frac{d}{dx}(\rho_\delta(x)+\delta)^{m-1}   &=
    \frac{d}{dx}\int_0^L \big(G(x+y)+G(x-y)\big)\rho_\delta(y)dy,    
\end{align}
and integrate from $L$ to $x$ to get
\begin{equation}\label{eq:mint}
    \nu_\delta ( \rho_\delta(x) + \delta)^{m-1}
    = \mathcal{G}_L[\rho_\delta](x)  + \nu_\delta \delta^{m-1},
\end{equation}
where $\mathcal{G}_L$ was defined in \eqref{eqn:G}.

Evaluating at $x=0$, we can get an uniform upper bound on $\nu_\delta$:
\begin{align}\label{eq:eb}
    \nu_\delta \leq \nu_\delta ((1+\delta)^{m-1}  - \delta^{m-1}) \leq \mathcal{G}_L[\rho_\delta](0)
    \leq 
    \int_0^L \big| G(y)+G(-y)-G(L+y)-G(L-y)\big| dy  .
\end{align}
Thus, the sequence $\nu_\delta$ has a convergent subsequence. In order to obtain 
convergence of a subsequence of $\rho_\delta$ we need some semicontinuity, which on the other
hand relies on the lower bound \eqref{eqn:lwb} from Lemma \ref{kreinrutmanlemma1}.

We are now ready to prove the main existence result for the case $m>2$.
\begin{thm}[Existence of compactly supported equilibria for $m > 2$] 
For each $L > 0$, there exists $\nu > 0$ and $\rho \in {\cal D}$ solving \eqref{eqn:ssG}.
\end{thm}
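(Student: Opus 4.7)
The plan is to pass to the limit $\delta \to 0$ in the family $(\nu_\delta, \rho_\delta)_{\delta>0}$ produced by the preceding theorem and verify that the limiting pair solves \eqref{eqn:ssG} with all properties required by ${\cal D}$. The upper bound \eqref{eq:eb} yields a subsequence $\nu_\delta \to \nu$; strict positivity of the limit is secured by the lower bound from Lemma~\ref{kreinrutmanlemma1}, which gives $\nu_\delta(m-1) \geq (1+\delta)^{2-m}\mu$ and hence $\nu \geq \mu/(m-1) > 0$, where $\mu > 0$ is the principal eigenvalue of ${\cal H}_L$ (positive by Krein--Rutman applied to its positive kernel $G(x-y)-G(x+y)$ on $(0,L)\times(0,L)$). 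Each $\rho_\delta$ is monotone nonincreasing on $[0,L]$ with range in $[0,1]$, so Helly's selection theorem furnishes a further subsequence converging pointwise on $[0,L]$ to a monotone nonincreasing limit $\rho$, with $\rho(0)=1$ and $\rho(L)=0$ inherited directly from $\rho_\delta(0)=1$ and $\rho_\delta(L)=0$.

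With these two convergences in hand, I would pass to the limit in the integrated identity \eqref{eq:mint},
$$\nu_\delta (\rho_\delta(x)+\delta)^{m-1} = {\cal G}_L[\rho_\delta](x) + \nu_\delta \delta^{m-1}.$$
The integrand defining ${\cal G}_L[\rho_\delta]$ is uniformly dominated by $4\|G\|_\infty$ on $[0,L]$ (using $\rho_\delta \leq 1$), so dominated convergence yields ${\cal G}_L[\rho_\delta](x) \to {\cal G}_L[\rho](x)$ for every $x$. The correction term $\nu_\delta\delta^{m-1}$ vanishes since $\nu_\delta$ is bounded and $m-1 > 1$, and the left-hand side converges pointwise to $\nu\rho(x)^{m-1}$. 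The limit therefore satisfies \eqref{eqn:ssG} pointwise on $[0,L]$.

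Finally, to confirm $\rho \in {\cal D}$, continuity of $\rho$ cannot be read off from uniform convergence of $\rho_\delta$ (this likely fails near $x=L$ because $\rho_\delta'(L)$ blows up as $\delta \to 0$); instead it is recovered from the limit equation itself, since ${\cal G}_L[\rho](\cdot)$ is continuous by continuity of $G$ and boundedness of $\rho$, so $\nu\rho^{m-1}$ and hence $\rho$ is continuous on $[0,L]$. Monotonicity and nonnegativity of $\rho$ are preserved under pointwise limits, and the boundary values have been identified. The principal obstacle in this argument is precisely the strict positivity of $\nu$ in the limit: this is exactly what the lower eigenvalue bound obtained through the comparison Lemma~\ref{comparisonlemma} is designed to guarantee, preventing the limiting solution from collapsing to a trivial one.
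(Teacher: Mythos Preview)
Your argument is correct and reaches the same conclusion, but the compactness step differs from the paper's. The paper subtracts \eqref{eq:mint} at two points $x,y$ and uses the Lipschitz continuity of $G$ together with the uniform lower bound on $\nu_\delta$ to obtain the uniform H\"older estimate
\[
|\rho_\delta(x)-\rho_\delta(y)|^{m-1} \le |(\rho_\delta(x)+\delta)^{m-1}-(\rho_\delta(y)+\delta)^{m-1}| \le C|x-y|,
\]
which yields equicontinuity and hence, via Arzel\`a--Ascoli, \emph{uniform} convergence of a subsequence. You instead exploit monotonicity directly through Helly's selection theorem to get pointwise convergence, pass to the limit in \eqref{eq:mint} by dominated convergence, and then recover continuity of $\rho$ a posteriori from the equation, using that ${\cal G}_L[\rho]$ is continuous. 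Both routes are valid; the paper's approach delivers a stronger mode of convergence and an explicit H\"older regularity for the limit $\rho$ (with exponent $1/(m-1)$), while your approach is arguably more elementary in that it avoids the H\"older estimate altogether. One small correction to your commentary: your parenthetical remark that uniform convergence ``likely fails near $x=L$'' is not accurate---the derivative $\rho_\delta'(L)$ does blow up, but the H\"older-$1/(m-1)$ seminorm stays bounded uniformly in $\delta$, which is exactly what the paper exploits. This does not affect the validity of your proof, only the motivation you offer for bypassing Arzel\`a--Ascoli.
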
 
\proof
We start with a subsequence $(\nu_k,\rho_k)$ such that $\nu_k$ converges to some $\nu \geq 0$, which exists due to the above arguments.  
We employ Lemma \ref{kreinrutmanlemma1} to obtain
$$ (m-1) \nu_\delta = \lambda(\mathcal{K})\geq (1+\delta)^{2-m} \mu, $$
which yields a uniform lower bound on $\nu_\delta$ as $\delta$ tends to zero. Subtracting the
equation for $\rho_\delta$ (see \eqref{eq:mint}) evaluated at $x$, respectively $y$, yields
\begin{align*}
    \nu_\delta (\rho_\delta(x)+\delta)^{m-1} - \nu_\delta(\rho_\delta(y)+\delta)^{m-1}
    = \int_0^L\big[ G(x+z)+G(x-z)-G(y+z)-G(y-z)\big] \rho_\delta(z)dz.
\end{align*}
Using the smoothness of $G$ and the lower bound for $\nu_\delta$, we estimate
\begin{equation}\label{eq:ec}
    \big|\rho_\delta(x)-\rho_\delta(y)\big|^{m-1}\leq    
    \big|(\rho_\delta(x)+\delta)^{m-1}-(\rho_\delta(y)+\delta)^{m-1}  \big|
    \leq C|x-y|
\end{equation}
for some positive constant $C$ independent of $\delta$. 

Hence, $\rho_\delta$ is uniformly H\"older continuous, in particular equicontinuous. With the Arzela-Ascoli theorem we deduce the existence of a convergent subsequence $(\nu_{\delta_k},\rho_{\delta_k})$ to 
$(\nu,\rho)$ satisfying \eqref{eqn:ssG} (the limiting equation of~\eqref{eq:mint}).

\endproof


\subsubsection*{Case $1 < m < 2$}

In the case $m < 2$ we use the transformation of variables from $\rho$ to $v:=\rho^{m-1}$ before differentiating, i.e., we analyze the problem
\begin{equation} \label{vequation}
\nu v(x) = \mathcal{G}_L[v^{1/(m-1)}](x), \qquad x \in [0,L].
\end{equation}
For the existence proof we use a very similar strategy based on a fixed-point formulation and 
the equation for the derivative of $v$ interpreted as an eigenvalue problem; hence we only give a sketch of the proof. 

The derivative $w=v'$ satisfies
\begin{equation}\label{wequation}
\nu (m-1) w(x) = \mathcal{H}_L[v^\beta w](x), \qquad x \in [0,L].
\end{equation}
with $\beta = \frac{2-m}{m-1}$ and $\mathcal{H}_L$ defined as in \eqref{eqn:H}.

Define ${\cal F}_3: {\cal D} \rightarrow C^1([0,L])$ as the map from $v$ to the eigenfunction $w$ of the leading eigenvalue of \eqref{wequation}, which is again normalized as a nonpositive function with $w(0)=0$ and
$\int_0^L w(x)~dx = -1$. Note that $v$ appears in \eqref{wequation} under the integral sign, hence enough regularity is obtained with $v$ being just continuous. Solutions of \eqref{wequation} are fixed points of the map ${\cal F}_4 \circ {\cal F}_3$, i.e.,
\begin{equation}
	v= {\cal F}_4 ({\cal F}_3(v)),
\end{equation}
with 
\begin{equation} 
\label{def:F4}
	\begin{array}{lrcl}{\cal F}_4: &  C^1([0,L]) & \rightarrow & C([0,L]) \\ 
	             & u & \mapsto & \rho, \end{array}
\end{equation}
given by \eqref{eqn:rhoint}.

With an analogous proof as for ${\cal F}_1$ we obtain
\begin{lemma} \label{F3lemma}
The operator ${\cal F}_3: {\cal D} \rightarrow C^1([0,L])$ is well-defined and continuous. Moreover, 
$w={\cal F}_3(v)$ is nonpositive with $w(0)=0$ and satisfies $\int_0^L w(x) dx = -1$.
\end{lemma}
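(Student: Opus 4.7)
The plan is to follow the template of Lemma \ref{F1lemma}, exploiting one structural simplification of the case $1<m<2$. Since $\beta=(2-m)/(m-1)>0$ and $v\in\mathcal{D}$ satisfies $0\le v\le 1$, the coefficient $v^\beta$ is automatically bounded on $[0,L]$: no regularization by a parameter $\delta$ is needed, and the argument can be carried out directly on the set $\mathcal{D}$ of merely continuous densities. The eigenvalue problem \eqref{wequation} reads $\lambda w = T_v[w]$, where $T_v[w](x) := \mathcal{H}_L[v^\beta w](x)$ and $\lambda = \nu(m-1)$.

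First I would apply Theorem \ref{thm:KR} to $T_v$ on the space $X=\{u\in C^1([0,L]):u(0)=0\}$ with cone $K=\{u\in X:\,u\le 0\}$. Compactness of $T_v:X\to X$ follows from the smoothness of $G$ on $\R\setminus\{0\}$ via Arzela-Ascoli. Strong positivity is verified exactly as in Lemma \ref{kreinrutmanlemma1}: for $0\ne w\in K$, the inequalities $G(x-y)>G(x+y)$ for $x,y>0$ (from the strict monotonicity of $g$) and $v(y)^\beta>0$ for $y\in[0,L)$ give $T_v[w](0)=0$, $(T_v[w])'(0)<0$, and $T_v[w](x)<0$ for $x\in(0,L]$, placing $T_v[w]$ in the interior of $K$. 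Theorem \ref{thm:KR} then produces a unique positive leading eigenvalue $\lambda(v)$ and, up to scaling, a unique eigenvector in $K$; normalizing by $\int_0^L w\,dx=-1$ defines $w=\mathcal{F}_3(v)\in C^1([0,L])$ with the asserted sign and boundary properties.

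For continuity, I would symmetrize via $z = v^{\beta/2}w$, obtaining a self-adjoint eigenvalue problem $\lambda z = \mathcal{K}_{v,L}[z]$ with
\begin{equation*}
\mathcal{K}_{v,L}[z] := v^{\beta/2}\,\mathcal{H}_L\bigl[v^{\beta/2}\,z\bigr].
\end{equation*}
Since $\beta/2>0$, the map $v\mapsto v^{\beta/2}$ is continuous from $\mathcal{D}$ (with the supremum norm) into $C([0,L])$, so $v\mapsto\mathcal{K}_{v,L}$ is continuous in the operator norm. Simplicity of the leading eigenvalue, combined with the Fredholm-alternative argument from Lemma \ref{F1lemma}, then yields local Lipschitz-type dependence of $(\lambda,z)$ on $v$ in the supremum norm, which is upgraded to $C^1([0,L])$ by differentiating the eigenvalue equation in $x$ and invoking the smoothness of $G$ away from the origin.

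The main obstacle is that the change of variables $z\mapsto w = v^{-\beta/2}z$ is singular at $x=L$, where $v$ vanishes. Rather than inverting it explicitly, I would reconstruct $w$ via the identity $w = T_v[w]/\lambda(v)$; its right-hand side involves only the bounded coefficient $v^\beta$ and the smoothing operator $\mathcal{H}_L$, and a uniform lower bound on $\lambda(v)$ obtained from Lemma \ref{comparisonlemma} (applied to $\mathcal{H}_L$ with a suitable positive test function bounded below by $c\,v^\beta$ on compact subsets of $[0,L)$), together with continuity of $v\mapsto v^\beta$, then delivers continuity of $w$ in $C^1([0,L])$ without ever inverting the symmetrizing transformation.
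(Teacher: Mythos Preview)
Your proposal is correct and follows the same route as the paper, which simply states that the proof is analogous to that of Lemma~\ref{F1lemma}; you have spelled out that analogy in detail, including the key simplification that no regularization by $\delta$ is needed since $v^\beta$ is already bounded on $[0,L]$. Your handling of the singular inverse of the symmetrizing transformation at $x=L$ via the identity $w=\mathcal{H}_L[v^{\beta/2}z]/\lambda(v)$ is a sensible addition that the paper does not make explicit.
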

The map ${\cal F}_4$ is the concatenation of ${\cal F}_2$ and an embedding operator. Using the above properties of ${\cal F}_2$ (see Lemma \ref{F2lemma}) we conclude that ${\cal F}_4 \circ {\cal F}_3$ is continuous and compact and maps ${\cal D}$ into itself. Thus we conclude from Schauder's fixed point theorem the existence of $\nu > 0$ and $w \in C^1([0,1])$ being the derivative of $v \in {\cal D}$ such that \eqref{wequation} is satisfied. Since $v$ is continuously differentiable and the exponent $\beta$ is positive, it is straightforward to justify integration of this equation and conclude the existence of $\nu > 0$ and $v$ solving \eqref{vequation}. 
Thus we obtain the following result: 

\begin{thm}[Existence of compactly supported equilibria for $1 < m < 2$] 
For each $L > 0$, there exists $\nu > 0$ and $\rho \in {\cal D}$ solving \eqref{eqn:ssG}.
\end{thm}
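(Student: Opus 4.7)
The plan is to mirror the proof for $m>2$ but work throughout with the transformed density $v = \rho^{m-1}$ and the eigenvalue problem \eqref{wequation} for its derivative $w=v'$. The decisive simplification is that the exponent $\beta = (2-m)/(m-1)$ is \emph{positive} here, so the weight $v^\beta$ appearing inside $\mathcal{H}_L$ in \eqref{wequation} \emph{vanishes} at the boundary $x=L$ instead of blowing up. This means no $\delta$-regularization is needed, and I can set up the Schauder argument directly on the set $\mathcal{D}$ from \eqref{setD}.

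\textbf{Step 1: Krein--Rutman for frozen $v$.} For $v \in \mathcal{D}$, I view \eqref{wequation} as $\lambda w = \mathcal{H}_L[v^\beta w]$ on the Banach space $X = \{u \in C^1([0,L]) : u(0)=0\}$ with the cone $K = \{u \in X : u \le 0\}$, exactly as in Lemma \ref{kreinrutmanlemma1}. Compactness of $u \mapsto \mathcal{H}_L[v^\beta u]$ is standard, and the same computation used for $\mathcal{F}_1$ shows that the image of $K\setminus\{0\}$ sits in the interior of $K$: the factor $v^\beta$ is strictly positive on $(0,L)$, the kernel $G(x-y)-G(x+y)$ is positive on the interior, and symmetry of the kernel forces $w(0)=0$ and $w'(0)<0$. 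The strong Krein--Rutman theorem then yields a unique leading eigenvalue $\lambda(v)>0$ with a unique normalized nonpositive eigenfunction $w$ satisfying $\int_0^L w\,dx = -1$. This defines $\mathcal{F}_3$.

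\textbf{Step 2: Continuity/compactness of the two maps.} To prove $\mathcal{F}_3:\mathcal{D}\to C^1([0,L])$ is continuous I would copy the Fredholm-alternative argument of Lemma \ref{F1lemma}: symmetrize via $\tilde w = v^{\beta/2} w$ so that the operator becomes $\widetilde{\mathcal{K}}_v[\tilde w] = v^{\beta/2}\,\mathcal{H}_L[v^{\beta/2}\tilde w]$, which is self-adjoint, and observe that $v\mapsto v^{\beta/2}$ is continuous on the bounded set $\{v\in C([0,L]):0\le v\le 1\}$ (H\"older if $\beta<2$, Lipschitz otherwise), so the integral kernel depends continuously on $v$. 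Differentiating the eigenvalue equation in $x$ and using smoothness of $G$ away from the origin promotes convergence to $C^1$. The map $\mathcal{F}_4$ of \eqref{def:F4}--\eqref{eqn:rhoint} is linear, compact from $C^1([0,L])$ into $C([0,L])$, and sends any normalized nonpositive $w$ with $w<0$ on $(0,L]$ into $\mathcal{D}$, by exactly the same reasoning as Lemma \ref{F2lemma}.

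\textbf{Step 3: Schauder and recovery of $\rho$.} Since $\mathcal{F}_4\circ\mathcal{F}_3:\mathcal{D}\to\mathcal{D}$ is continuous and compact on the closed, convex, bounded set $\mathcal{D}\subset C([0,L])$, Schauder's theorem produces a fixed point $v\in\mathcal{D}$ with $\nu=\lambda(v)/(m-1)>0$ satisfying \eqref{wequation}. Because $\beta>0$ and $v\in C^1([0,L])$ with $v(L)=0$, I can integrate \eqref{wequation} from $L$ to $x$ to recover \eqref{vequation}, and setting $\rho=v^{1/(m-1)}$ gives a solution of \eqref{eqn:ssG} in $\mathcal{D}$.

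\textbf{Main obstacle.} The delicate point is the continuity of $\mathcal{F}_3$ near $x=L$, where $v$ vanishes: when $\beta<1$ (that is, for $3/2<m<2$) the map $v\mapsto v^\beta$ is only H\"older in $v$, so passing from sup-norm perturbations of $v$ to $C^1$ perturbations of the eigenfunction demands uniform control of $\mathcal{H}_L[v^\beta\,\cdot\,]$ down to the boundary. I expect this to be manageable because $v^\beta$ is uniformly bounded on $\mathcal{D}$ and $G$ is smooth off the diagonal, but it is the step where the proof genuinely differs from the linear case of \cite{BurgerDiFrancescoFranek} and where one must be careful that the lack of a uniform positive lower bound on $v$ near $x=L$ does not destroy strict positivity inside the cone.
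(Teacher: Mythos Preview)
Your proposal is correct and follows essentially the same route as the paper: work with $v=\rho^{m-1}$, set up the Krein--Rutman eigenvalue problem \eqref{wequation} with the positive exponent $\beta$ inside $\mathcal{H}_L$ (so no $\delta$-regularization is needed), define $\mathcal{F}_3$ and $\mathcal{F}_4$ exactly as the paper does, and apply Schauder on $\mathcal{D}$. The paper's own argument is in fact only a sketch that cites Lemma~\ref{F1lemma} as an ``analogous proof'' for continuity of $\mathcal{F}_3$, so your more detailed Step~2 and the boundary issue you flag as the main obstacle are not addressed any more carefully in the paper than in your outline.
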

 
\begin{rem}
We finally explain why we choose to work in this case with a transformed variable and not with the original $\rho$ formulation. In principle, a fixed point approach could be constructed as in the case $m>2$, but it is more difficult to justify the way back from the equation for the derivative $u$ to the density $\rho$. For $m<2$ we end up with $\rho^{m-2}$ as a factor of $\partial_x \rho$, and since $m<2$ we cannot justify its well-definedness in cases where $\rho = 0$. This however seems to be rather a technical issue. More severely is the fact that we find $\rho'(L)=0$ in this case. Thus, there is no argument that $\rho$ is positive close to $L$ and the fixed-point proof in the $\rho$ variable would only yield the existence of a solution whose support is a subset of $[0,L]$. In the formulation with the new variable $v$ we know that $v$ is decreasing and $v'(L)$ is positive, so the support is exactly $[0,L]$.  
\end{rem}


\subsection{Further Properties of Compactly Supported Stationary Solutions}
 
We finally discuss some finer properties of compactly supported stationary solutions. A first observation concerns the behaviour at $x=L$.  From \eqref{eqn:ssG}-\eqref{eqn:G}, using the Lipschitz-continuity of $G$ we can find a constant depending on $G$ and $\nu$ only, such that
\begin{equation*}
	\rho(x) \leq C (L-x)^{1/(m-1)}.
\end{equation*}
Equation above yields another change at $m=2$.  The solution is H\"older continuous for $m > 2$ (as the Barenblatt solutions of the porous medium equation), while it is even differentiable, with vanishing derivative at $x=L$, for $m < 2$. With $m \rightarrow 1$, more and more derivatives at $L$ become zero. 

Another interesting question is the potential concavity of the stationary state. For this sake we compute a formula for the second derivative by differentiating \eqref{eq:ss3} on the support of $\rho$:
\begin{equation} 
\label{secondderivative}
 \nu (m-1) (m-2) \rho(x)^{m-3}(\rho'(x))^2 + \nu (m-1) \rho(x)^{m-2}\rho''(x) =  \int_0^L [G'(x-y)-G'(x+y)] \rho'(y) dy
\end{equation}
Evaluate at the origin $x=0$ and use $\rho'(0)=0$, $\rho(0)=1$ to get
\begin{equation}
\nu (m-1)\rho''(0) =  \int_0^L [G'(-y)-G'(y)] \rho'(y) dy.
\end{equation} 
Since $G'$ is positive for negative argument and vice versa we obtain, also using the negativity of $\rho'$, that $\rho''(0)<0$. Thus, $\rho$ is locally concave at $x=0$, which is not surprising since we expect to have its maximum there. 

The range of concavity of a stationary solution depends on $m$ as well as on the concavity of the kernel $G$. As already argued in \cite{BurgerDiFrancescoFranek}, the right-hand-side in \eqref{secondderivative} is nonpositive if $G$ is concave on $(-2L,2L)$, i.e., if the support is smaller than half the range of concavity of $G$. If $m>2$ this immediately implies that $\rho''$ is negative on the support of $\rho$, because $\nu (m-1) (m-2) \rho^{m-3}(\rho')^2$ is positive. In the case $m < 2$ we already know that $\rho'(L)=0$, which does not allow $\rho$ to be concave on the whole support.
 


\section{Numerical Calculation of the Steady States} 
\label{sec:numss}
According to the results of the previous section, steady states exist for any fixed domain size $L$. Now we turn our attention to the qualitative features of these equilibria, in particular to the dependence of the diffusion coefficient $\nu$  on $L$ and the limit $L\to \infty$.
To this purpose, we develop an iterative method to solve the eigenvalue problem  \eqref{eq:ss3} with general interaction potentials. The ultimate goal is to characterize  the long time behaviours of solutions to the evolution equation~\eqref{eq:aggeq}, and this approach is much more effective than solving \eqref{eq:aggeq} directly (see Section \ref{sect:dynamics}).

\subsection{Iterative Scheme} 
\label{subsect:intnum}


When $m \neq 2$, the nonlinear eigenvalue problem 
\eqref{eq:ss3} can still be solved iteratively,
though not in one single step as for $m=2$ \cite{BurgerDiFrancescoFranek}. The analytical study of the previous section suggests the following iterative scheme for the steady state. 
Fix $L$ and  a nonnegative function $\rho_0$ (the initial guess) on $[0,L]$. For $k=0,1,2,...$, perform the following two steps:
\begin{enumerate}
 \item Compute the leading eigenvalue  $\lambda_{k}$ and the the corresponding eigenfunction $e_k$  of the eigenvalue problem (see \eqref{eq:ss3}):
\begin{equation}
\label{eqn:step1}
 \lambda_{k} (m-1) \rho_{k}(x)^{m-2} e_{k}(x) = \mathcal{H}_L[e_{k}](x), \qquad x \in[0,L].
\end{equation}
The eigenfunction $e_k$, computed up to a multiplication constant, is assumed to be nonpositive (this convention is made for consistency with considerations in Section \ref{subsect:fp}).
\item Find $\rho_{k+1}$ by integration and normalization to unit mass:
\[
 \rho_{k+1}(x) = \frac{\int_{x}^L e_k(x)dx}{2\int_0^L\int_x^L e_k(s)dsdx}
=\frac{\int_{x}^L e_k(x)dx}{2\int_0^L xe_k(x)dx}.
\]
\end{enumerate}
In the limit when $k \to \infty$, $\lambda_k$ 
converges to $\nu$ (depending on $L$) and $\rho_k$ 
converges to a solution of~\eqref{eqn:ss}.

To implement the algorithm numerically, we set an uniform grid $0=x_0<x_1<\cdots <x_N=L$ on $[0,L]$. We assume that $e_k$ is piecewise constant on the grid, that is,  $e_k(x) = g_i$ on $[x_i,x_{i+1})$, $i=0, 1, \cdots, N-1$. Given the piecewise constant function $e_k(x)$ on $[0,L]$, $\rho_{k+1}$ in Step 2 is piecewise linear.  To avoid however a possible singularity at $x_N=L$ (see Section  \ref{subsect:fp} for  $m>2$), in Step 1 we evaluate the density at middle points $x_{i+\frac{1}{2}}=(x_i+x_{i+1})/2$, that is,  $\rho_k(x_{i+\frac{1}{2}}) = \big(\rho_k(x_i)+\rho_k(x_{i+1})\big)/2$. 
Hence equation \eqref{eqn:step1} evaluated at $x_{i+\frac{1}{2}}$ then reads (with subindex $k$ omitted for convenience)
\begin{equation}
\label{eqn:evd}
\lambda (m-1) \rho(x_{i+\frac{1}{2}})^{m-2} g_i= \sum_{j=0}^{N-1} M_{ij} g_j, \qquad j=0,1, \cdots, N-1,
\end{equation}
where $M_{ij}>0$ are approximations (via trapezoidal rule) of 
$
 \int_{x_j}^{x_{j+1}} \left[ G(x_{i+1/2}-y) - G(x_{i+1/2}+y)\right] dy.
$
With $\rho(x_{i+\frac{1}{2}})$ known from the previous iteration, 
the leading eigenvalue problem \eqref{eqn:evd} to find  $\lambda$ and $\mathbf{g}=(g_0,\cdots,g_{N-1})$ can be solved with standard algorithms. We terminate the algorithm when the change in $\rho_k$ (calculated  in the supremum norm) is smaller than $10^{-8}$.

Uniqueness of equilibria is not guaranteed by the analytical considerations from Section \ref{subsect:fp}. However, all numerical investigations we performed showed convergence of the algorithm described above to a unique solution of \eqref{eqn:ss}. Finally, we point out that the numerical algorithm also works for compactly supported kernels such as $G(x)=\max(1-|x|,0)$, which violate assumption 4 (strict  monotonicity)  in Section~\ref{sec:defG}. For such kernels though, the steady states of the evolution equation \eqref{eq:aggeq} are {\em not} unique, as multiple disconnected and non-interacting bumps can exist.


\subsection{Qualitative Properties of the Steady States} 
\label{subsect:qprop}

We start with the dependence of the diffusion coefficient $\nu$ on the  domain size $L$.
Figure~\ref{fig:bifur} corresponds to the Gaussian kernel $G(x)=e^{-|x|^2/2}/\sqrt{2\pi}$, while qualitatively similar plots were obtained for other kernels such as the Bessel potential $G(x)=e^{-|x|}/2$ or the hat function $G(x)=\max(1-|x|,0)$. In the limit $L \to \infty$, $\nu$ goes to infinity when $m>2$, while for $m \in (1,2]$, $\nu$ approaches a constant value. Both behaviours will be  commented in detail below. 

\begin{figure}[htp]
 \begin{center}
  \includegraphics[totalheight=0.3\textheight]{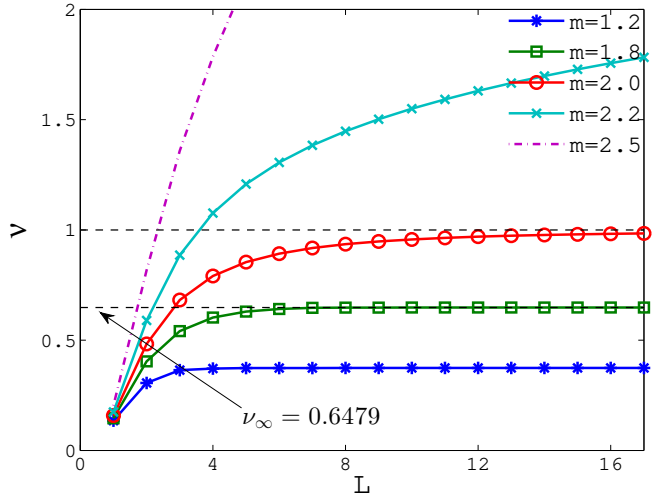}
 \end{center}
\caption{The dependence of the diffusion coefficient $\nu$
on the size $L$ of the support of the steady state for different exponents $m$.  When $m>2$, $\nu$ increases indefinitely with $L$, while for $1<m\leq 2$, $\nu$ approaches a finite limit $\nu_\infty$ as $L\to\infty$ ($\nu_\infty=\|G\|_{L^1}$ for $m=2$~\cite{BurgerDiFrancescoFranek} and specified by~\eqref{eq:nleig} for $m<2$). The plot corresponds to the Gaussian kernel $G(x)=e^{-|x|^2/2}/\sqrt{2\pi}$, but the result seems to be generic for the class of potentials considered in this paper.}
\label{fig:bifur}
\end{figure}

The steady states shown in Figure~\ref{fig:sstate}
exhibit qualitatively different behaviours too,  depending on whether $1<m<2$ or $m>2$. We discuss the two cases separately.

\subsubsection*{Case $m > 2$}
When $m>2$  (see Figure~\ref{fig:sstate}(a)), the steady states are decreasing and concave down, and spread and decay to zero when 
$L$ increases to infinity. Moreover the numerical solution has a very peculiar form in this limit: it is close to a
constant on the support, with a sharp drop near the boundary at $L$. Hence, for $L$ large, one can approximate 
the density profile  by a rescaled characteristic function $\rho(x) \approx 
\frac{1}{2L}\chi_{[-L,L]}$ (to preserve unitary mass). Then $C=G*\rho(L) \approx 0$, 
\[ 
 \frac{1}{2L}\int_{-L}^L G(x-y) dy
\approx \frac{1}{2L}\|G\|_{L^1},
\]
and the steady state equation~\eqref{eqn:ss} yields
$ (2L)^{1-m} \nu\approx (2L)^{-1}\|G\|_{L^1}$. This implies the scaling 
law 
\begin{equation}
\label{eqn:scaleL}
L\sim \nu^{1/(m-2)}, \qquad \text{ for large } L.
\end{equation} 
A similar result, but regarding  the dependence of $L$ on large mass (with $\nu$ fixed) was derived in~\cite{TBL}.

To summarize, when $m>2$, for {\em any} diffusion coefficient $\nu>0$, there exists a compactly supported steady state that solves \eqref{eqn:ss}. Numerical results indicate that this steady state is unique and simulations of the evolution equation \eqref{eq:aggeq} (see Section \ref{sect:dynamics}) indicate that these equilibria are {\em global} attractors  for the dynamics. The larger the diffusion coefficient $\nu$, the larger the size $L$ of the support, with scaling given by \eqref{eqn:scaleL}. Equilibria look like rescaled characteristic functions as $\nu$ (and $L$) become large.

\begin{figure}[htp]
 \begin{center}
  \includegraphics[totalheight=0.25\textheight]{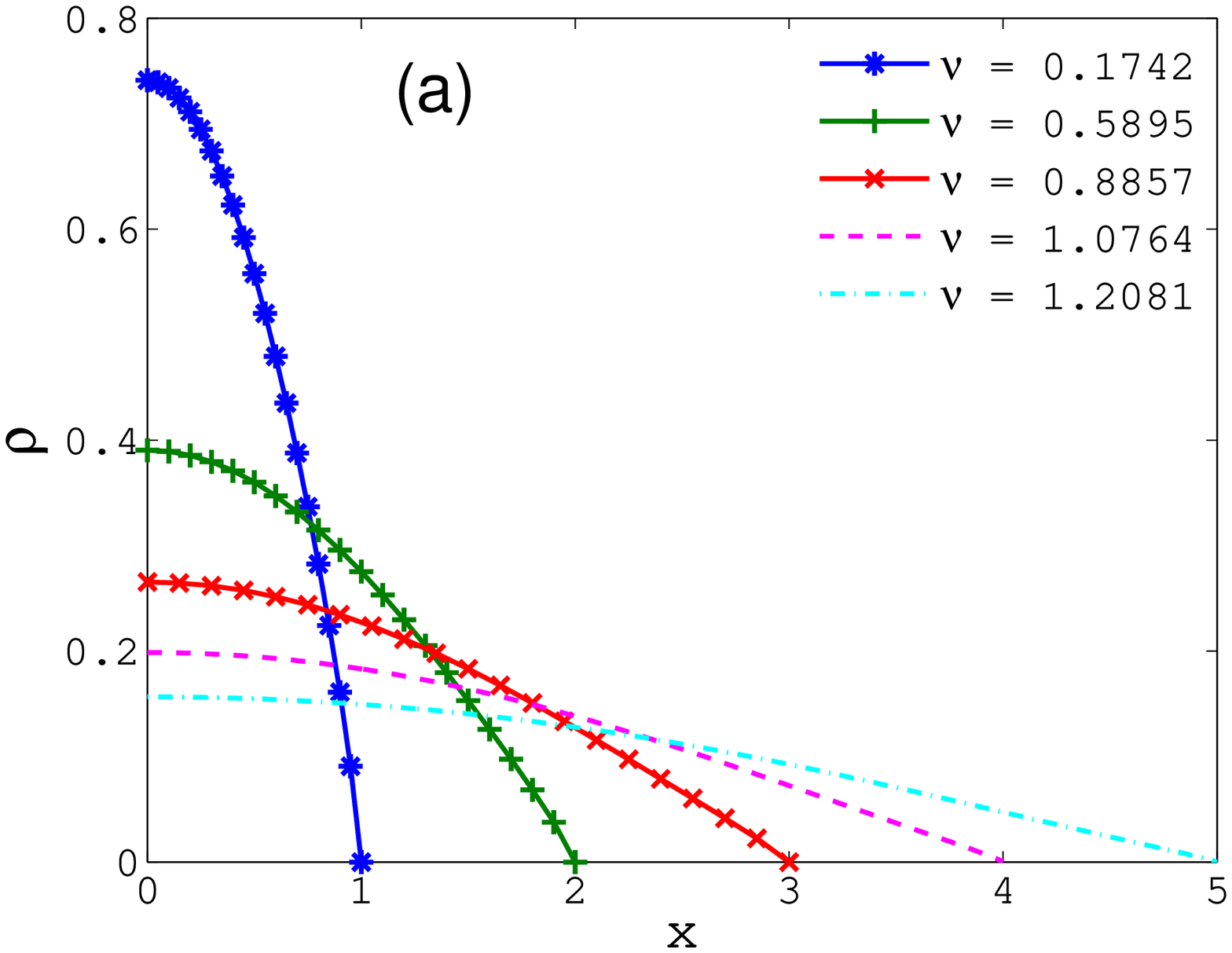}
$~~~$
  \includegraphics[totalheight=0.25\textheight]{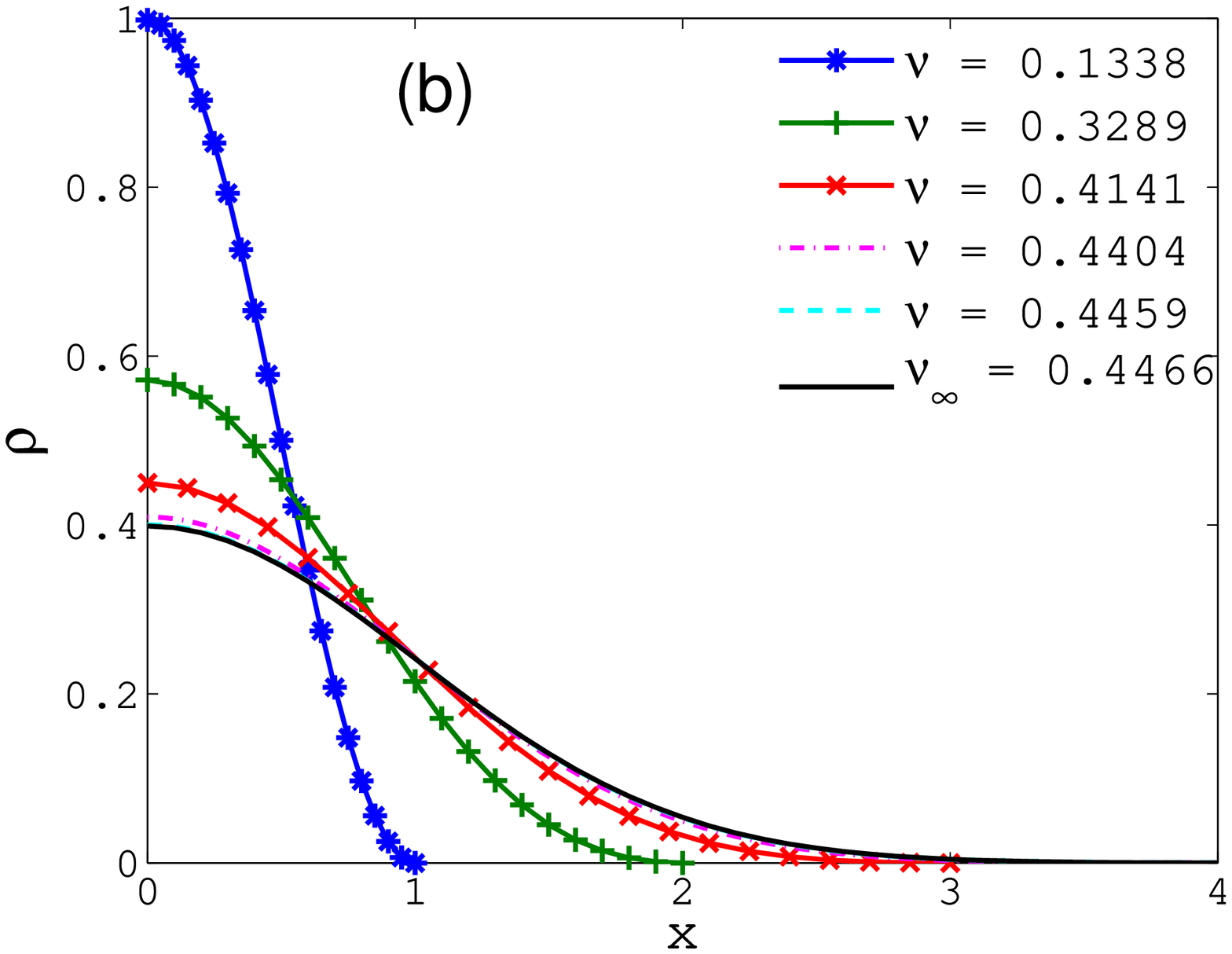}
 \end{center}
\caption{The steady states for the Gaussian 
kernel $G(x)=e^{-|x|^2/2}/\sqrt{2\pi}$ for ({\bf a}) 
$m=2.2$ and ({\bf b})
$m=1.5$, supported on domains of different sizes $L=1,2,3,4,5$ (the corresponding diffusion coefficients are shown in the legend). As $L$ increases to infinity the steady states approach a rescaled characteristic function when $m>2$, and converge to a fixed profile $\rho_\infty$ governed  by~\eqref{eq:nleig} when $m\in (1,2)$. In the latter case, the steady state with $\nu=0.4459$ (or $L=5$)
is indistinguishable from $\rho_\infty$ on the scale of the figure.}
\label{fig:sstate}
\end{figure}

\subsubsection*{Case $1<m <2$}
Solutions in this case (see Figure~\ref{fig:sstate}(b)) are also decreasing, but change concavity and reach  the boundary $L$ of the support with zero derivative (results consistent with analytical considerations of Section \ref{subsect:fp}). 

Also in contrast with the previous case, the steady states approach a fixed profile as $L$ goes to infinity. We can identify this limiting profile as follows. As $L\to \infty$, $\nu$ approaches a constant value $\nu_\infty$ (Figure \ref{fig:bifur}) and the constant $C$ in \eqref{eqn:ss} approaches zero (see discussion on equilibria of infinite support in Section \ref{sect:statsol}). Combing these observations, we infer from \eqref{eqn:ss} that  the limiting profiles $\rho_\infty$ are governed 
by the nonlinear eigenvalue problem 
\begin{equation}\label{eq:nleig}
\nu_\infty \rho_\infty^{m-1}(x) = G*\rho_\infty(x), \qquad x \in \mathbb{R}.
\end{equation}
In general, although no closed form expressions of the
eigenpair $(\nu_\infty,\rho_\infty)$ are expected, 
\eqref{eq:nleig} can be solved exactly for some special kernels $G$ like the Gaussian $e^{-|x|^2}/\sqrt{2\pi}$ or the Bessel potential $e^{-|x|}/2$.

%



We present the explicit calculation for the limiting profiles $\rho_\infty$ corresponding to the Gaussian kernel $G(x)=e^{-|x|^2}/\sqrt{2\pi}.$
For this kernel, the nonlinear eigenvalue problem~\eqref{eq:nleig} reads
\begin{equation}
\label{eqn:ssG-C0}
\nu_\infty \rho_\infty(x)^{m-1} = \frac{1}{\sqrt{2\pi}} \int_{-\infty}^{\infty} e^{-(x-y)^2/2} \rho_\infty(y) dy, \qquad x \in \mathbb{R}.
\end{equation}
We look for a solution in Gaussian form:
\[
\rho_\infty(x) = e^{-x^2/2\sigma^2}/\sqrt{2\pi \sigma^2},
\]
and by substituting into \eqref{eqn:ssG-C0}, we find 
\[
 \sigma^2 = \frac{m-1}{2-m},\qquad 
 \nu_\infty = (2\pi)^{\frac{m}{2}-1} (m-1)^{\frac{m-1}{2}} (2-m)^{\frac{2-m}{2}}.
\]
For $m=1.5$, $\nu_\infty\approx 0.4466$ and 
$\rho_\infty(x)=e^{-|x|^2/2}/\sqrt{2\pi}$, solution confirmed by numerics in  Figure \ref{fig:sstate}(b).

To conclude, when $m\in(1,2)$, solutions of \eqref{eqn:ss} exist only for $\nu < \nu_\infty$.  Numerics indicates that these solutions are unique, and simulations of the evolution equation \eqref{eq:aggeq} in Section \ref{sect:dynamics} suggest  that these equilibria are only {\em local} attractors  for the dynamics.  As $\nu$ approaches $\nu_\infty$ from below, the size $L$ of the support of solutions tends to infinity, and solutions approach a fixed density profile $\rho_\infty$. The pair $(\nu_\infty,\rho_\infty)$ solves \eqref{eq:nleig}. No compactly supported steady states exist for $\nu\geq \nu_\infty$. In such a case the evolution of \eqref{eq:aggeq} is dominated by diffusion and spreading to a trivial solution occurs.


\subsection{Alternative Approach for the Bessel Potential $G(x)=e^{-|x|}/2$}
\label{sect:Morse}
For the Bessel potential $G(x)=e^{-|x|}/2$, in addition to using the 
general iterative scheme, various explicit calculations can be worked out by taking advantage of the fact that $G$ is the Green's function of the differential operator $\operatorname{Id}-\partial_{xx}$.  This special kernel has been already used in the study of steady states for $m=3$ in~\cite{TBL}, as well as in various other nonlocal aggregation equations, including  higher dimensional models \cite{BeTo2011,LeToBe2009}. This alternative approach confirms that the qualitative behaviours for the Gaussian potential apply also to the Bessel potential, suggesting that the results are generic to all potentials in the class considered in this paper.

To this end, by symmetry of the solution, the equation~\eqref{eqn:ss}
reads
\begin{equation}
\label{eqn:ss-exph}
\nu \rho(x)^{m-1} = 
\frac{1}{2}\int_0^L (e^{-|x-y|}+e^{-|x+y|})\rho(y)dy-C, \qquad x\in [0,L],
\end{equation}
where 
\begin{equation}
\label{eqn:C-exp}
C = \frac{1}{2}\int_0^L (e^{-L-y}+e^{-L+y})
\rho(y)dy.
\end{equation}

Use the change of variable  $p(x)=\rho(x)^{m-1}$ and apply the
differential operator $\operatorname{Id}-\partial_{xx}$ to \eqref{eqn:ss-exph}, to get 
\begin{equation}
\label{eqn:diff-v}
 p-\frac{d^2p}{dx^2} = \frac{1}{\nu}\left(p^{\frac{1}{m-1}}-C\right),\qquad  x\in [0,L],
\end{equation}
with the boundary conditions $p'(0)=0$ and $p(L)=0$. Note that we assume here that solutions have the properties considered in Section \ref{subsect:fp}.

Multiplying both sides of  \eqref{eqn:diff-v} with $p_x$ and integrating with respect to $x$, one finds
\begin{equation}
\label{eq:diff-intv}
 \frac{1}{2}p^2 -\frac{1}{2}p_x^2 = \frac{m-1}{m\nu}p^{\frac{m}{m-1}} - \frac{C}{\nu}p+C_1.
\end{equation}
Here the integration constant $C_1$ can be obtained from the conditions at the origin $p'(0)=0$, i.e.,
\[
 C_1 = \frac{1}{2}p(0)^2 -\frac{m-1}{m\nu}p(0)^{\frac{m}{m-1}}
+\frac{C}{\nu}p(0).
\]
Solving $p'(x)$ from~\eqref{eq:diff-intv} using the fact that
$p'(x)\leq 0$ on $[0,L]$, the solution can now be expressed
implicitly as
\begin{equation}
\label{eqn:impl}
x = \int_{p(x)}^{p(0)} \left[
p^2-p(0)^2-\frac{2(m-1)}{m\nu}\big(p^{m/(m-1)}-p(0)^{m/(m-1)}\big)+
\frac{2C}{\nu}\big(p-p(0)\big)
\right]^{-1/2}dp,
\end{equation}
where  the size $L$ of the support is found from the condition $p(L)=0$:
\begin{equation}
\label{eqn:L}
L = \int_0^{p(0)}  \left[p^2-p(0)^2-\frac{2(m-1)}{m\nu}(p^{m/(m-1)}-p(0)^{m/(m-1)})+\frac{2C}{\nu}(p-p(0))
\right]^{-1/2}dp.
\end{equation}

For fixed $\nu$, the problem of searching for the steady states is
now reduced to finding the parameters $p(0)$ and $C$ such that the solution $\rho =
p^{1/(m-1)}$ (depending on $p(0)$ and $C$) satisfies the
constraints of unit mass and equation~\eqref{eqn:C-exp} for $C$.

This problem can be further simplified if instead of the unit mass condition we use the normalization
$\bar{\rho}(0)=\bar{p}(0)=1$ for the solution. As explained in Section~\ref{sect:ss-pf}, a simple rescaling in the magnitude of the
solution can be used to go between solutions with different types of normalizations. Hence, let $\bar{p}(x)$ be the solution
defined implicitly by~\eqref{eqn:impl} (with $C$ replaced by $\bar{C}$) which satisfies $\bar{p}(0)=1$.  By defining the function 
$$
I(\bar{C}) := \frac{1}{2}\int_0^L 
(e^{-L-y}+e^{-L+y})\bar{p}(y)^{1/(m-1)}dy,
$$
we infer from \eqref{eqn:C-exp} that $\bar{C}$ is a solution of the fixed point problem $I(\bar{C})=\bar{C}$. 
Notice that passing to the renormalization to unit mass, $\nu$ and 
$C$ have to be scaled along with $\rho$, i.e.,
\begin{equation}\label{eq:rescale}
    \rho \to \frac{\bar{\rho}}{\int_{-L}^L \bar{\rho}(y) dy}
    ,\quad
    \nu \to \bar{\nu} \Big(\int_{-L}^L \bar{\rho}(y)dy\Big)^{m-2},
    \quad 
    C \to \bar{C}\int_{-L}^L \bar{\rho}(y)dy,
\end{equation}
with $\bar{\rho}=\bar{p}^{1/(m-1)}$. 

When $\bar{\nu}$ is fixed and $\bar{p}(0)=1$, the interval on which 
we search for the fixed point of $I(\bar{C})$ is restricted  by the requirement $\bar{p}''(0)\leq 0$  in \eqref{eqn:diff-v} and by the non-negativity of the expression inside the square bracket in~\eqref{eqn:impl} at $p=0$. Both restrictions 
lead to upper bounds on $\bar{C}$, and can be written as 
\begin{equation}\label{eq:Cbound}
 \bar{C} \leq \min\left(1-\bar{\nu}, \frac{m-1}{m}-\frac{\bar{\nu}}{2}\right).
\end{equation}

The function $I(\bar{C})$, shown in Figure~\ref{fig:besselfixed}(a)
for $m=3$, has a fixed point only for $\bar{\nu}$ small
enough (as indicated by~\eqref{eq:Cbound}). An important observation is that the fixed point appears to be {\em unique}, consolidating our observation regarding unique solutions of \eqref{eqn:ss}.

\begin{figure}[htp]
    \begin{center}
        \includegraphics[totalheight=0.25\textheight]{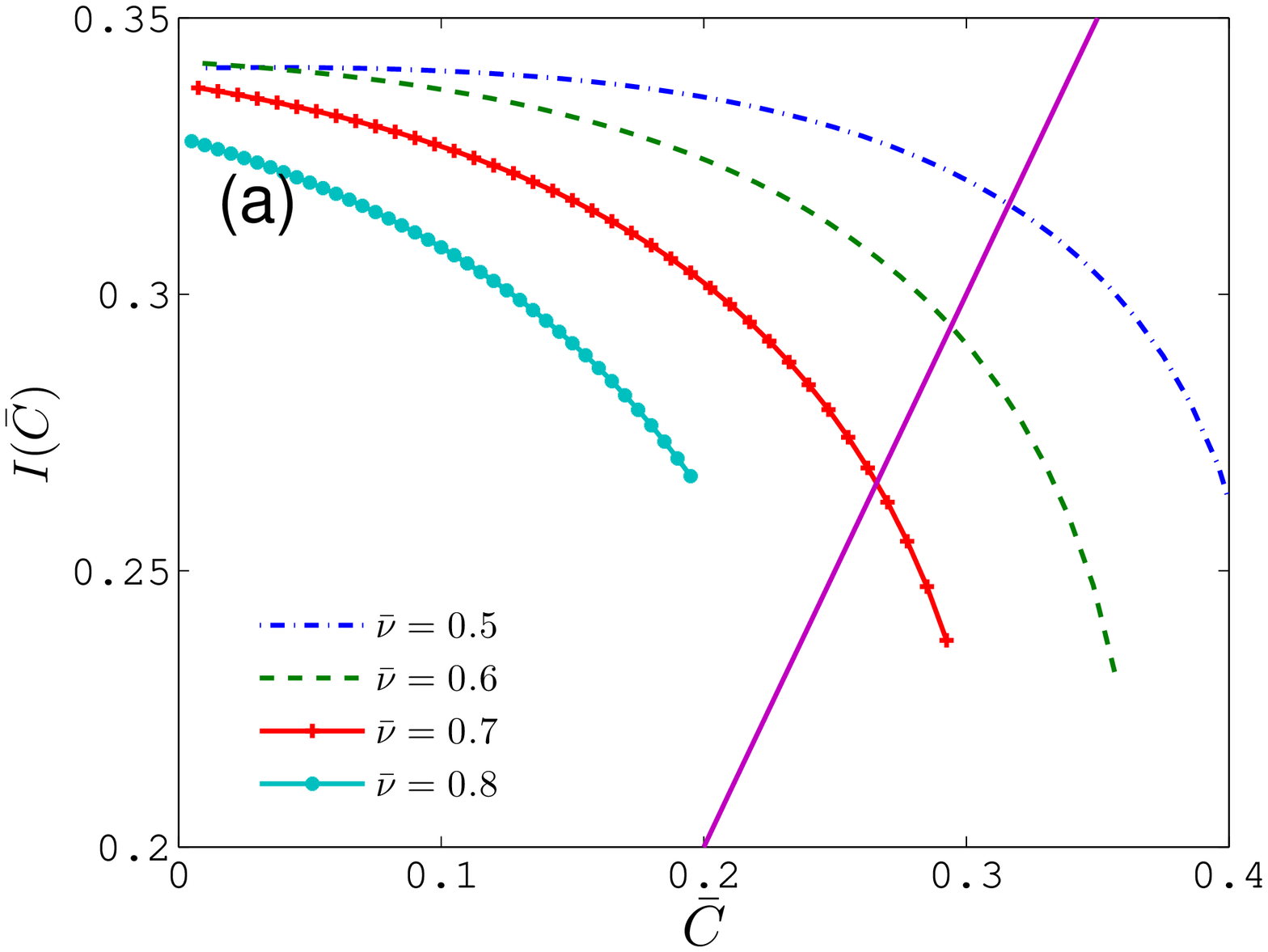}
$~~~$
        \includegraphics[totalheight=0.25\textheight]{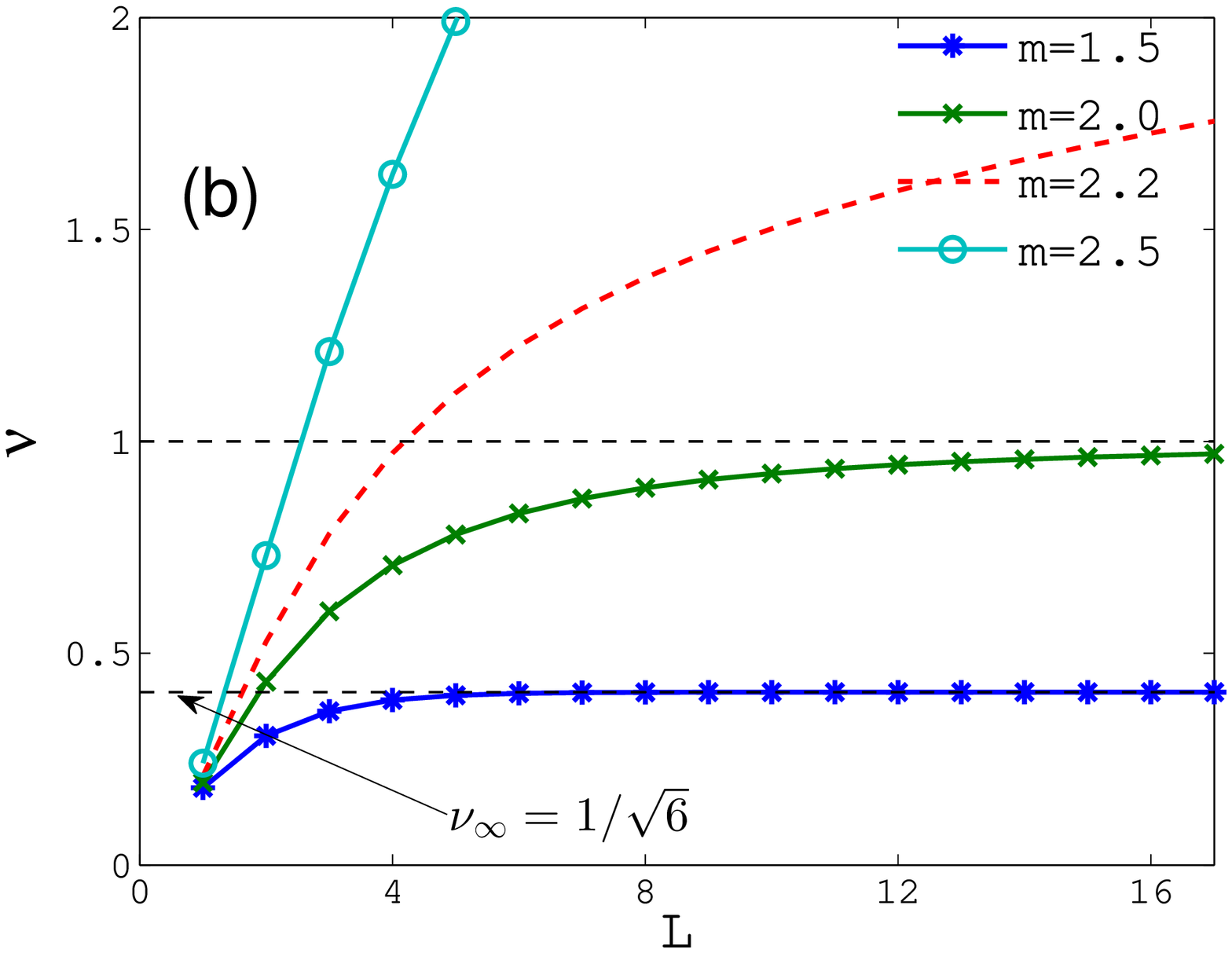}
    \end{center}
    \caption{ {\bf (a)} The intersection of $I(\bar{C})$ with $\bar{C}$
    for $m=3$ and different $\bar{\nu}$. {\bf (b)} The dependence
    of $\nu$ on $L$ by finding the fixed point of $I(\bar{C})$ 
and rescaling to the unit mass normalization.}
    \label{fig:besselfixed}
\end{figure}

Rescaling the variables using the unit mass
normalization~\eqref{eq:rescale}, the dependence of $\nu$
on $L$ is shown in Figure~\ref{fig:besselfixed}(b). The result is similar to Figure~\ref{fig:bifur}, corresponding to the Gaussian kernel, and its interpretation follows closely the considerations made there.

The steady states computed from \eqref{eqn:diff-v} are also similar
to the corresponding equilibria for the Gaussian kernel (Figure~\ref{fig:sstate}),
and are not shown here. When $m>2$, as $\nu$ increases to infinity, 
the convergence of the steady state to a rescaled 
characteristic function can be explained from 
a phase-plane analysis of the  ODE~\eqref{eqn:diff-v} or the implicit equation~\eqref{eqn:impl}. It can be shown (details not included) that in this limit, the coefficient $\nu$ and the constant $C$ relate in such a way that the solution $p(x)$ stays indeed very close to $p(0)$ for 
$x$ far away from the origin. 

When $m\in (1,2)$, the limiting profiles $\rho_\infty$ can also be
obtained explicitly, with $\nu_\infty$ solved from an algebraic
equation. Using the vanishing condition
$C=0$ and $\rho_\infty(\infty)\to 0$, the equation for $p_\infty =
\rho_\infty^{m-1}$, when integrated once as in~\eqref{eq:diff-intv}, 
becomes
\begin{equation}
    \frac{1}{2}p_\infty^2 -\frac{1}{2}p_\infty'^2 =
    \frac{m-1}{m\nu_\infty} p_\infty^{\frac{m}{m-1}}.
\end{equation}
In particular, $p_\infty 
(0)=\big( \frac{m\nu_\infty}{2(m-1)}\big)^{(m-1)/(2-m)}$.
As opposed to the general implicit equation~\eqref{eqn:impl}, this equation can be integrated explicitly as 
\[
    x = \int_{p_\infty(x)}^{p_\infty(0)}
    p^{-1}\left[1-\left(\frac{p}{p_\infty(0)}
        \right)^{\frac{2-m}{m-1}}\right]^{-1/2} dp =
        \frac{2(m-1)}{2-m}\mbox{arctanh}\sqrt{
        1- \left(\frac{p_\infty(x)}{p_\infty(0)}
        \right)^{\frac{2-m}{m-1}}}.
\]
Therefore $\rho_\infty $ is then given by 
\[
    \rho_\infty(x) = \left(\frac{m\nu_\infty}{2(m-1)}\right)^{1/(2-m)} 
    \left[1-\tanh^2 \frac{2-m}{2(m-1)}x\right]^{1/(2-m)},
\]
where the constant $\nu_\infty$ is determined from the unit 
total mass, $\int_{\mathbb{R}} \rho_\infty(x)dx = 1$. The exact 
value of $\nu_\infty$ can be obtained in a few cases, for instance,
$\nu_\infty = 1/\sqrt{6}$ when $m=3/2$
(see Figure~\ref{fig:besselfixed}(b)) and $\nu_\infty =
(2\pi^2)^{-1/3}$ when $m=4/3$.



\paragraph{Remark.} In higher dimensions, similar techniques involving the
shooting method and a fixed point equation can be
formulated to find the steady states, however the analogous equation to~\eqref{eq:ss3}
for the derivative of the density does not seem to exist. 
As a result, there is no such iterative scheme as in section~\ref{subsect:intnum} to compute the steady states. This is the primary reason we focus on one dimension in this paper,  although we expect similar qualitatively behaviours and demarcations at $m=2$. 
Among general kernels, the Bessel potentials are the very few cases when the corresponding steady states can be calculated without solving the evolution equation. The situation is however much more delicate in higher dimensions. It can be shown that, due to radial symmetry, the corresponding integral equation \eqref{eqn:ss-hd-gen} in higher dimensions can be converted into the ODE
\begin{equation}
 p-\frac{n-1}{r}\frac{dp}{dr}-\frac{d^2p}{dr^2} = \frac{1}{\nu}
\big(p^{\frac{1}{m-1}}-C\big).
\end{equation}
Even though $m=2$ is still expected to be a critical exponent,
there is no solution when $m$ is smaller than $m^*=2n/(n+2)$ or
$1/(m-1)$ is larger than the critical Sobolev exponent
$2^*-1=(n-2)/(n+2)$ in dimensions greater than two, at least for the limiting case~\cite{MR695535} with $C=0$. Another complication 
is the singularity at the origin, which leads to blowup solutions
when $m<2-2/n$ and the diffusion is not strong enough to 
balance the aggregation~\cite{KozonoSugiyama08}.




\section{Dynamic Evolution}
\label{sect:dynamics}

In this section we compute numerically the solutions to equation \eqref{eq:aggeq} to show that the steady states analyzed in the previous sections capture indeed the long time behaviour of the aggregation model.

\subsection{Numerical Method}
\label{subsect:dynnum}
We use the numerical method recently developed  in \cite{CCH} to deal specifically with aggregation equations like \eqref{eq:aggeq}, which contain interaction terms, nonlinear diffusion, and have a gradient flow structure. The method is based on a finite-volume scheme that preserves positivity and has the desired energy dissipation properties. We present briefly the method and for details we refer to \cite{CCH}.

We take a computational domain $[-L,L]$ with equally spaced grid points $-L=x_0<x_1<\cdots<x_N=L$. Consider the midpoints $x_{j-1/2}=(x_{j-1}+x_j)/2$ and define $\bar{\rho}_j$ to be the average of the density on the cell $C_j=[x_{j-1/2},x_{j+1/2}]$. The finite volume method from \cite{CCH} consists in a semi-discrete scheme in conservative form for $\bar{\rho}_j$:
\begin{equation}\label{eq:semi-scheme}
 \frac{d}{dt}\bar{\rho}_j(t) = -\frac{F_{j+1/2}(t)-F_{j-1/2}(t)}{
\Delta x},
\end{equation}
where $F_{j+1/2}$ approximates the continuous flux $\rho(G*\rho-\nu \rho^{m-1})_x$ 
at cell interfaces $x_{j+1/2}$. 

The numerical fluxes $F_{j+1/2}$ are computed as follows. Consider the numerical approximations of the velocities at $x_{j+1/2}$:
$$ u_{j+1/2} = (\xi_{j+1}-\xi_j)/\Delta x, $$where
$$ \xi_j = \Delta x\sum_k G(x_j-x_k)\bar{\rho}_k-\nu \bar{\rho}_j^{m-1}.$$
Then, the numerical flux $F_{j+1/2}$ is approximated  by
\[
 F_{j+1/2} = u_{j+1/2}^+\rho_{j+1/2}^+ + u_{j+1/2}^-\rho_{j+1/2}^-,
\]
where $u_{j+1/2}^+=\max(u_{j+1/2},0), u_{j+1/2}^-=\min(u_{j+1/2},0)$
and $\rho_{j+1/2}^\pm$ is the one-sided density at $x_{j+1/2}$ 
which depends on the spatial order of the scheme: $\rho_{j+1/2}^+=\bar{\rho}_{j+1},
\rho_{j+1/2}^-=\bar{\rho}_j$ for first order and 
$\rho_{j+1/2}^\pm$ reconstructed from minmod or other 
slope-limiter for second order. 

In all numerical examples  the second order finite-volume method  is used, while the system of \eqref{eq:semi-scheme} is integrated using the third-order strong-stability preserving Runge-Kutta method~\cite{GST}. Despite its simplicity, this scheme preserves the positivity
of the solution and dissipates the energy (see~\cite{CCH} for more details and extensive examples), which is  critical for the study of long time behaviour of the solutions.


\subsection{Asymptotic Behaviour and Metastability ($m>2$)}
\label{sec:coarsemeta}
We evolve solutions to \eqref{eq:aggeq} in time and observe how they approach equilibria.   To allow for a broad range of data, the initial densities are generated randomly as follows.  First we generate density values on a coarse grid from the uniform distribution on the interval [0,1]. Then multiply everywhere with a Gaussian function of a certain width to ensure sufficient decay at the end of the intervals. Finally interpolate the density values on a fine grid and normalize to unit mass.

\begin{figure}[htb]
\begin{center} 
\includegraphics[totalheight=0.25\textheight]{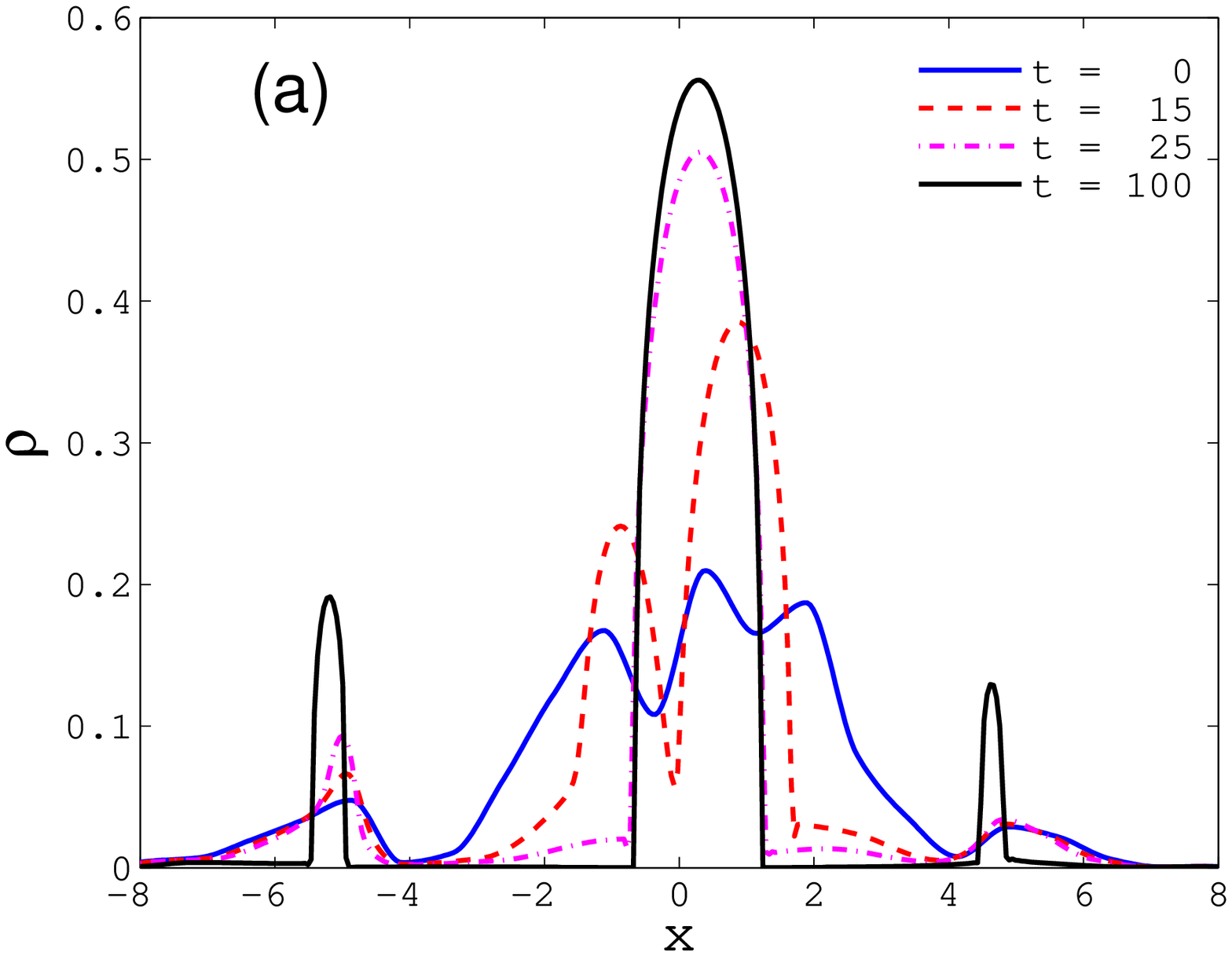} 
$ ~~~~ $
\includegraphics[totalheight=0.25\textheight]{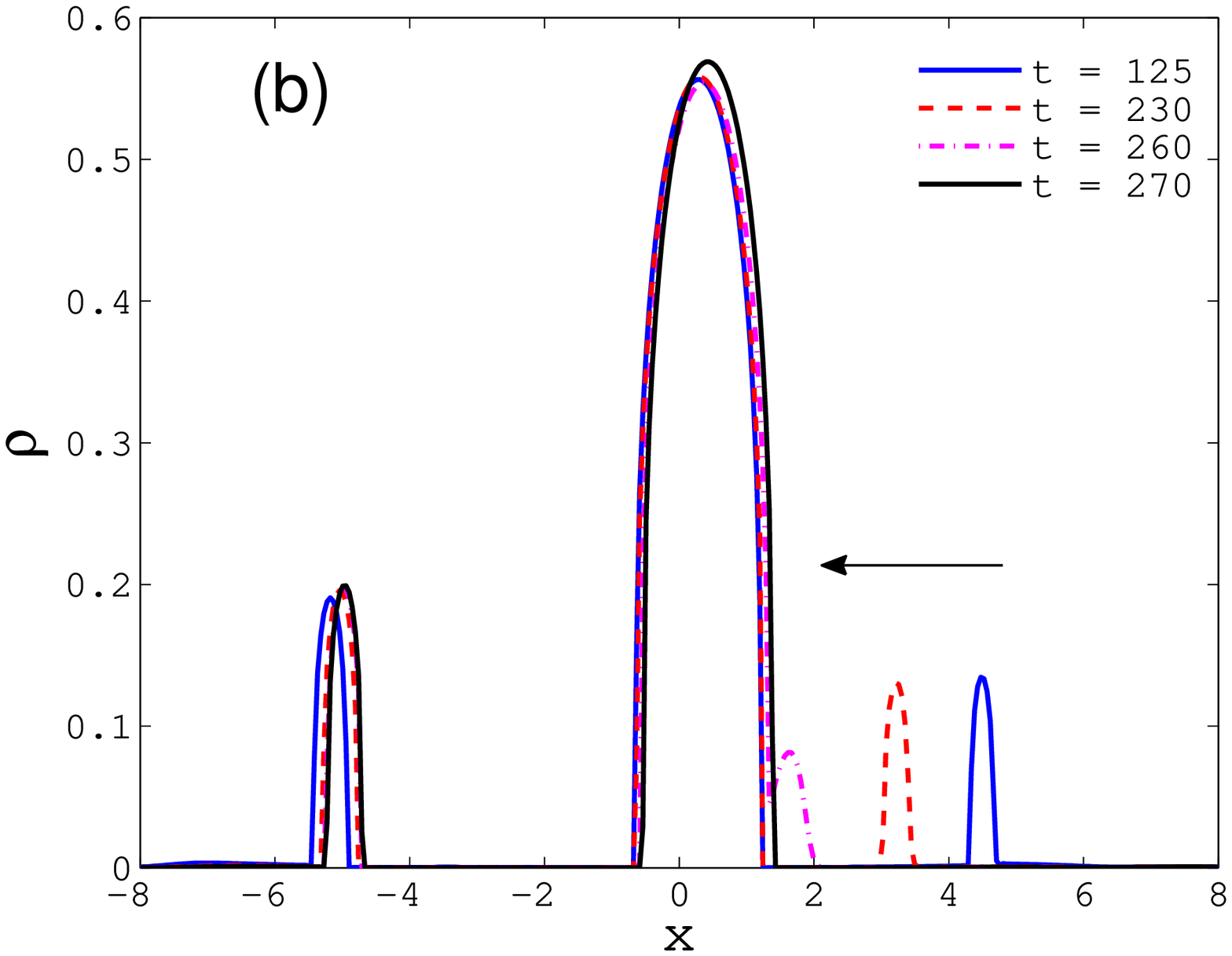} \\
\includegraphics[totalheight=0.25\textheight]{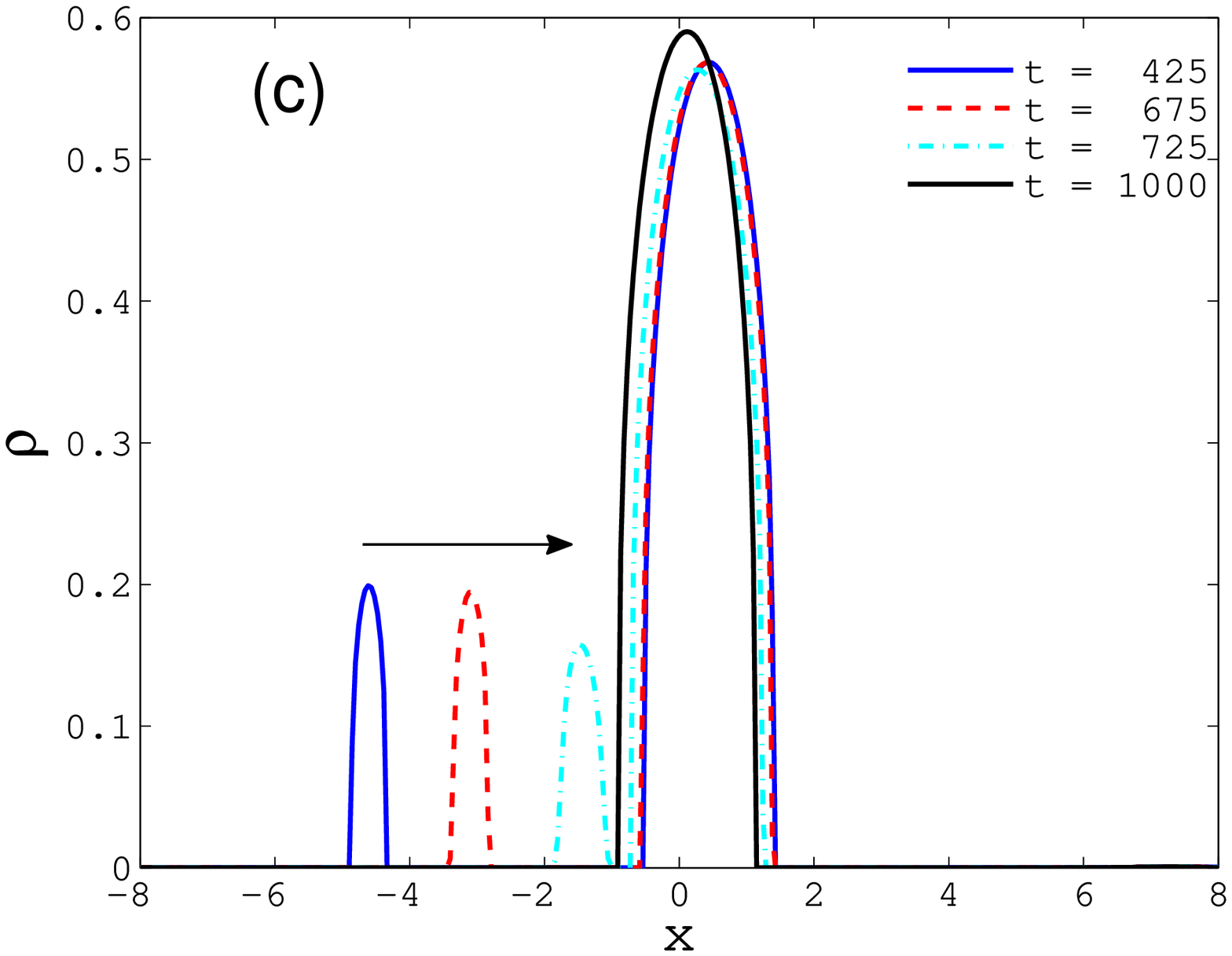}
$ ~~~~ $
\includegraphics[totalheight=0.24\textheight]{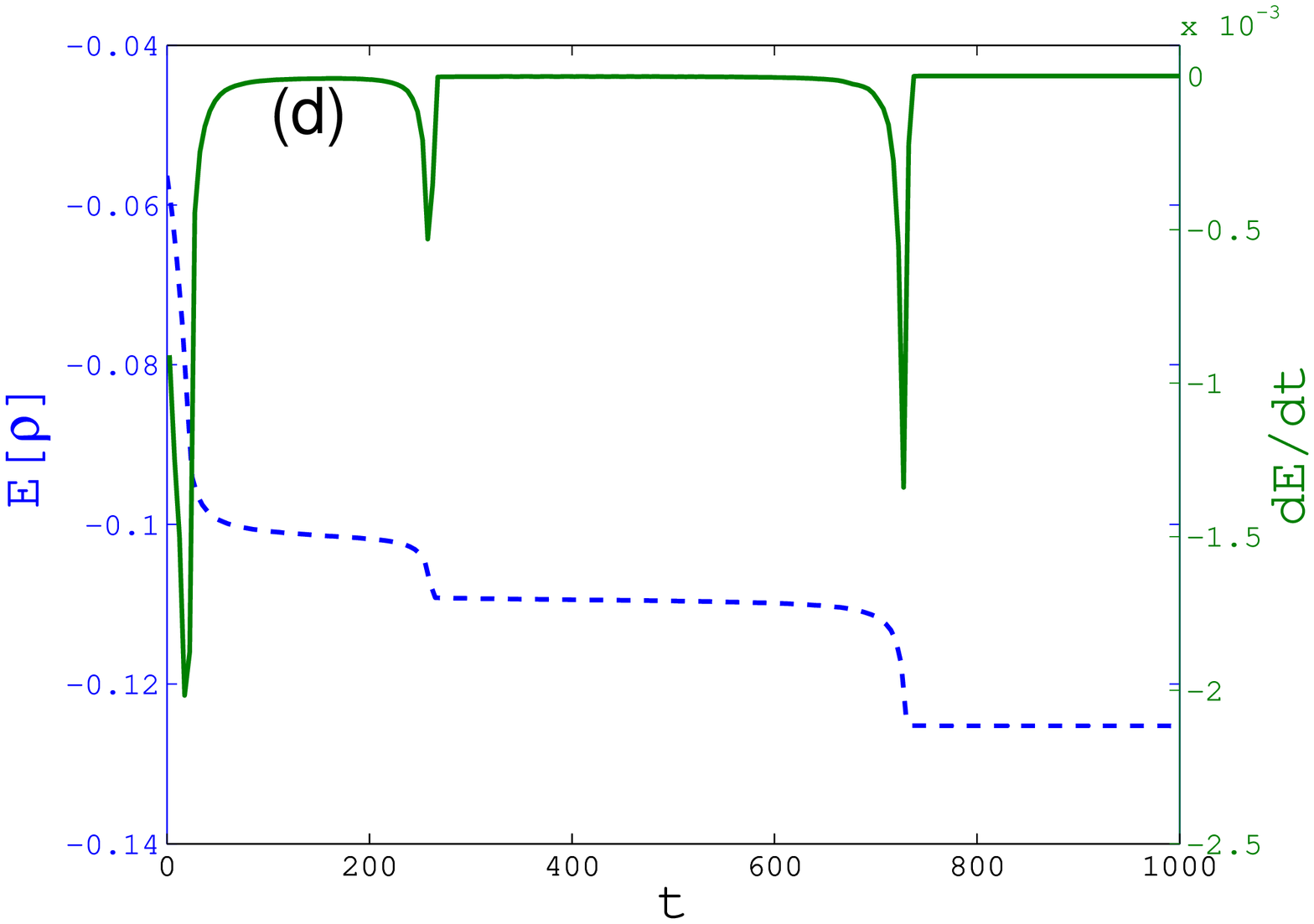}  
\end{center}
\caption{Dynamic evolution of  \eqref{eq:aggeq} with $m=4$, $\nu=0.6$, and a Bessel attractive potential.  Plot ({\bf a}) demonstrates the coarsening of a randomly generated initial density. Plots ({\bf b}) and ({\bf c}) show the subsequent time evolution, with the smaller clumps eventually merging into the larger one. Plot ({\bf d}) shows a staircase-like evolution of the energy (dashed line), along with the plot of its time derivative (solid line). The level regions of the energy correspond to metastable states consisting of multiple groups, while the sharp drops in $E$ and the peaks in its derivative $dE/dt$ correspond to group mergers.}
 \label{fig:dynamics-mg2}
\end{figure}

\begin{figure}
 \begin{center}
  \includegraphics[totalheight=0.21\textheight]{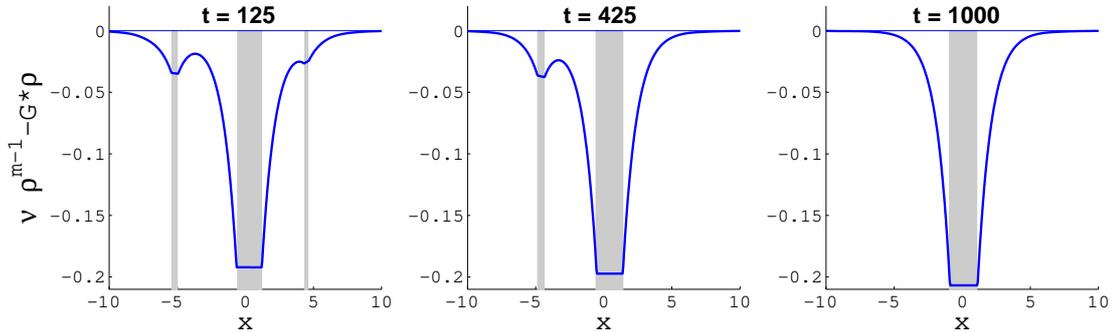}
 \end{center}
\caption{The plot of $\delta E/\delta \rho = \nu \rho^{m-1}-G*\rho$ at $t=125$, $t=425$ and $t=1000$ --- see Figure \ref{fig:dynamics-mg2}.  The graphs show  that $\delta E/\delta \rho$ is constant on the different 
components of $\mathrm{supp}[\rho]$ (shaded regions). The (relatively weak) interaction between components is through the convolution $G*\rho$. 
}
\label{fig:metaxi}
\end{figure}

Figure \ref{fig:dynamics-mg2}(a)-(c) shows the time evolution of the solution of \eqref{eq:aggeq} for $m=4$, $\nu =0.6$ starting from such a randomly generated initial density ($t=0$ shown in plot Figure 3(a)). The attractive interaction kernel is the Bessel potential $G(x)=e^{-|x|}/2$. There is a fast coarsening of the initial density distribution resulting in three clumps (more clumps may appear for initial data with larger width), which eventually merge on a slow time scale. 

The fast coarsening followed by slow group merging is characteristic to all simulations we performed, but it is more pronounced in the aggregation dominated regime of large $m$ 
(when $\rho(x,t)<1$) and small $\nu$.
 Our results are also consistent with numerical observations made in \cite{TBL} using cubic nonlinearity ($m=3$). 

To further understand this coarsening and metastability,  it is very important to monitor dynamically the energy given by \eqref{def:entropyfunctional}, shown in Figure \ref{fig:dynamics-mg2}(d). Note that after the initial coarsening, it decreases in a staircase fashion. A flat region corresponds to a multiple-clump configuration, which acts as a metastable state where the solution could spend a considerable amount of time. We chose to show this particular numerical simulation where the dynamics escapes the multiple-clump states rather fast, but we have seen cases where a multiple-clump state persists for a very long time. In fact, this delicate dynamics near a metastable state required the design of a suitable numerical scheme. Since the attractive kernels we use decay relatively fast with distance, once two groups get separated by a certain distance, they interact with each other very weakly and a long time may pass before their merger occurs. The sharp drops in the energy correspond to clump mergers. The 
closer clump on the right first merges with the large one in the center at around $t=265$. After the first merger, the two remaining clumps will persist for some time in a metastable configuration, before the left clump starts travelling to the right and merge with the central group. The second merger occurs at around $t=730$, corresponding to the second significant drop in energy. After that, the dynamics settles into the steady state identified and investigated in Sections  \ref{sect:ss-pf} and \ref{sect:Morse}.  

One implication from these results is that there is no entropy-entropy dissipation
inequality of the form 
\[
 \frac{d}{dt} \big( E[\rho(\cdot,t)]-E[\rho(\cdot,\infty)]\big) \leq - 
\mu\big( E[\rho(\cdot,t)]-E[\rho(\cdot,\infty)]\big),
\]
and the convergence of the solution $\rho(\cdot,t)$ to the steady state $\rho(\cdot,\infty)$ can be arbitrarily slow, in contrast to the fast convergence 
 for other equations like the rescaled
porous medium equation~\cite{carrillo2000asymptotic} or the Keller-Segel equation with subcritical mass~\cite{calvez2012refined}.

The metastability can be understood from the plots of $\delta E/\delta \rho = \nu \rho^{m-1} - G*\rho$ shown in  Figure~\ref{fig:metaxi}. It is easy to see that, when the clumps are away from 
each other, $\delta E/\delta \rho$ is almost equal to a constant on each such component of the support of $\rho$ (shaded area), 
or $\partial_x( \delta E/\delta \rho) \approx 0$. 
Therefore, $\frac{d}{dt} E = -\int \rho |\partial_x( \delta E/\delta \rho)|^2 
\approx 0$, and this explains the level regions observed
 in Figure~\ref{fig:dynamics-mg2}(d). When the clumps are far away from each other, the velocity of one particular clump is due to the nonlocal interaction 
kernel $G$, and decays as a function of distance between the other clumps 
in the same rate as $G$~\cite{slepcev2008coarsening}.  
In the extreme case of a compactly supported kernel $G$, steady states with multiple disconnected bumps do exist, where each such component is a locally stable  steady state.

Coarsening, as well as metastability, are well-known phenomena in phase separation models \cite{BatesXun94}, but are mainly studied in local systems with non-convex energy. Formation of metastable states in the case of nonlocal attraction has also been observed in chemotactic models \cite{DolakSchmeiser05,MR2397995}. In these works the metastability was created by a doubly degenerate mobility, and indeed, in the zero diffusion limit, the metastable clusters became stable stationary solutions. In the case of the model we consider here, the metastability seems to be the result of a quasi-stationary  energetic balance of attraction and repulsion,  rather than being induced by mobility. A further analytical study of such effect seems to be a valuable subject for future research. 

We conclude this subsection with a conjecture. Based on the numerical experiments and the theoretical results we conjecture that for $m>2$ the equilibria studied in Sections  \ref{sect:ss-pf} and \ref{sec:numss} are global attractors for the dynamics of \eqref{eq:aggeq}. Note that in this regime a global minimizer of the energy \eqref{def:entropyfunctional}-\eqref{eqn:powerfF} is known to exist \cite{Bedrossian}.


\subsection{Limited Basin of Attraction ($1<m<2$)}

While the coarsening of the initial data and the metastability of the dynamics is still observed in this regime, the stronger diffusion (at least when the maximum density is less than one) 
leads to richer long time asymptotic behaviours. More precisely, the equilibria of Sections \ref{sect:ss-pf} and \ref{sec:numss} are approached only when the diffusion coefficient is small enough  ($\nu<\nu_\infty$) and the initial data is localized near its center of mass.

\begin{figure}[htp]
 \begin{center}
  \includegraphics[totalheight=0.26\textheight]{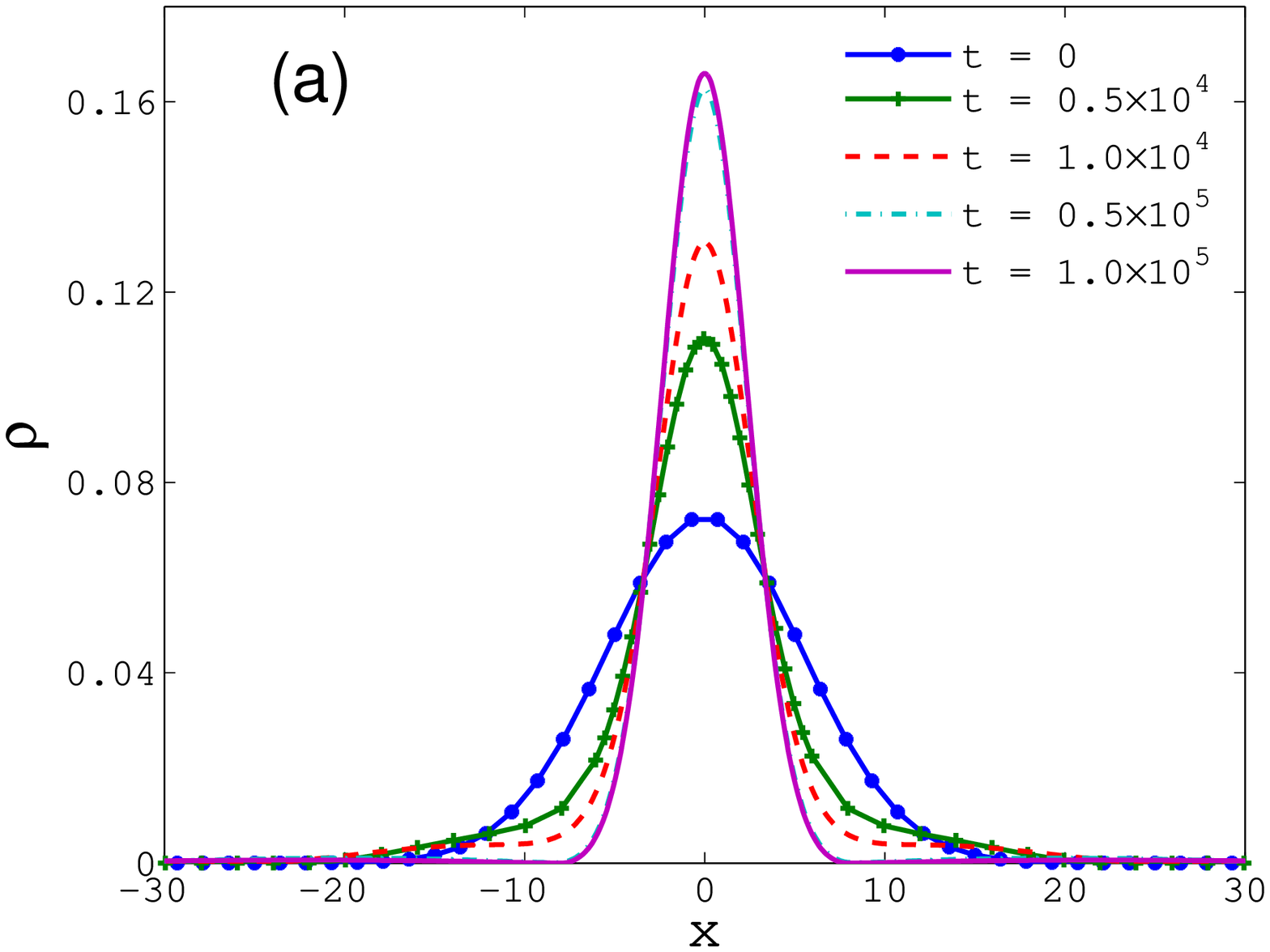}
$~~$
\includegraphics[totalheight=0.25\textheight]{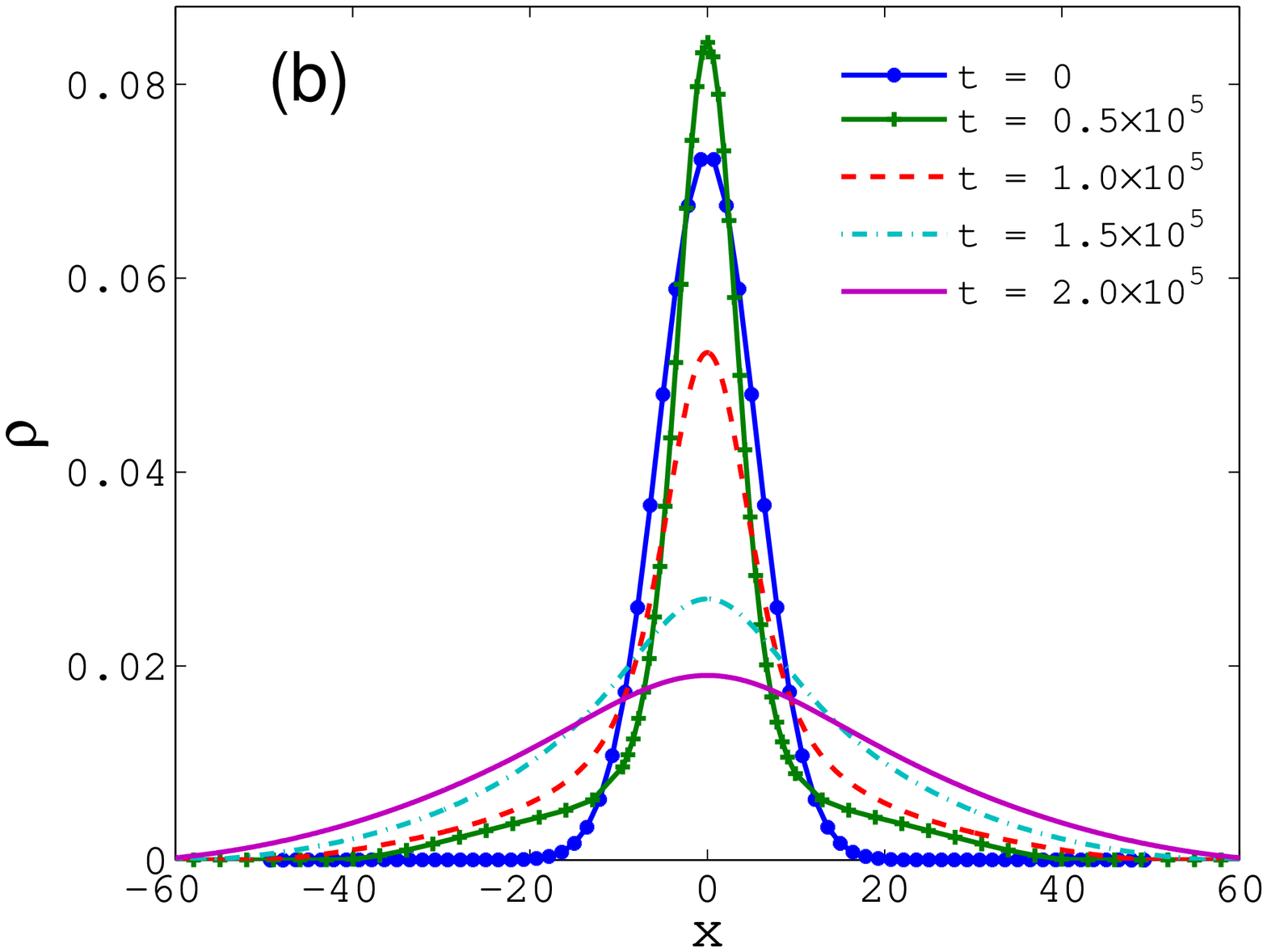}
 \end{center}
\caption{Different long time behaviours of the solution for $m=1.8$. 
Here $G(x)=e^{-|x|^2/2}/\sqrt{2\pi}$, $\nu = 0.60<\nu_\infty\approx 0.6479$ 
and the initial data is a Gaussian $u_0(x) = 
e^{-|x|^2/2\sigma^2}/\sqrt{2\pi \sigma^2}$ ($\sigma^2=30$ in ({\bf a}) and $\sigma^2=50$ in ({\bf b})). The solution 
converges either to the compactly supported steady state or to the trivial solution, depending on the spread of the initial profile.}
\label{fig:smallmevol}
\end{figure}

The dynamic evolution for two Gaussian initial profiles with different widths for $m=1.8$, $ \nu = 0.60$ and $G(x)=e^{-|x|^2}/\sqrt{2\pi}$ is shown in Figure~\ref{fig:smallmevol}. 
Although the compactly supported steady state exists ($\nu < \nu_\infty\approx0.6479$), 
the solution converges to this stationary solution only when the 
width of the initial data is small enough; otherwise the solution spreads and decays to a trivial state. The steady states of Sections  \ref{sect:ss-pf} and \ref{sec:numss} are only local attractors for the dynamics. The decaying 
solution with large width can be explained formally by the long wave
approximation $G*\rho(x) \approx \|G\|_{L^1}\rho(x)$, where the 
original equation~\eqref{eq:aggeq} becomes 
\begin{equation}\label{eq:longwave}
 \partial_t \rho 
=\partial_x\big(\rho \partial_x(\nu \rho^{m-1}-\rho\|G\|_{L^1})\big).
\end{equation}
For small densities, the diffusion arising from $\nu \rho^{m-1}$ 
dominates the anti-diffusion from $\rho \|G\|_{L^1}$, and
the solution continues to decay to zero. 

\begin{figure}[htp]
 \begin{center}
\includegraphics[totalheight=0.26\textheight]{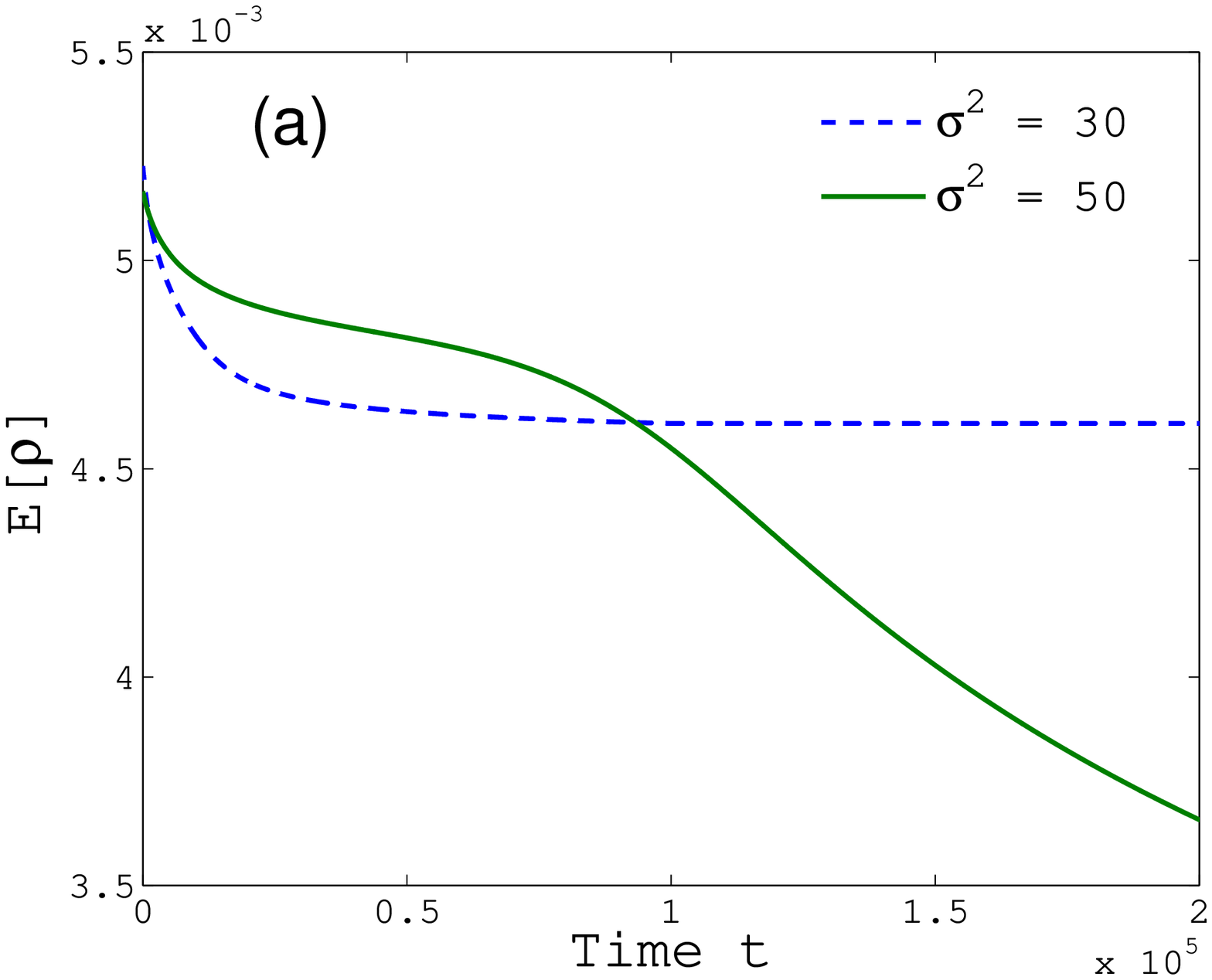}
$~~$
\includegraphics[totalheight=0.25\textheight]{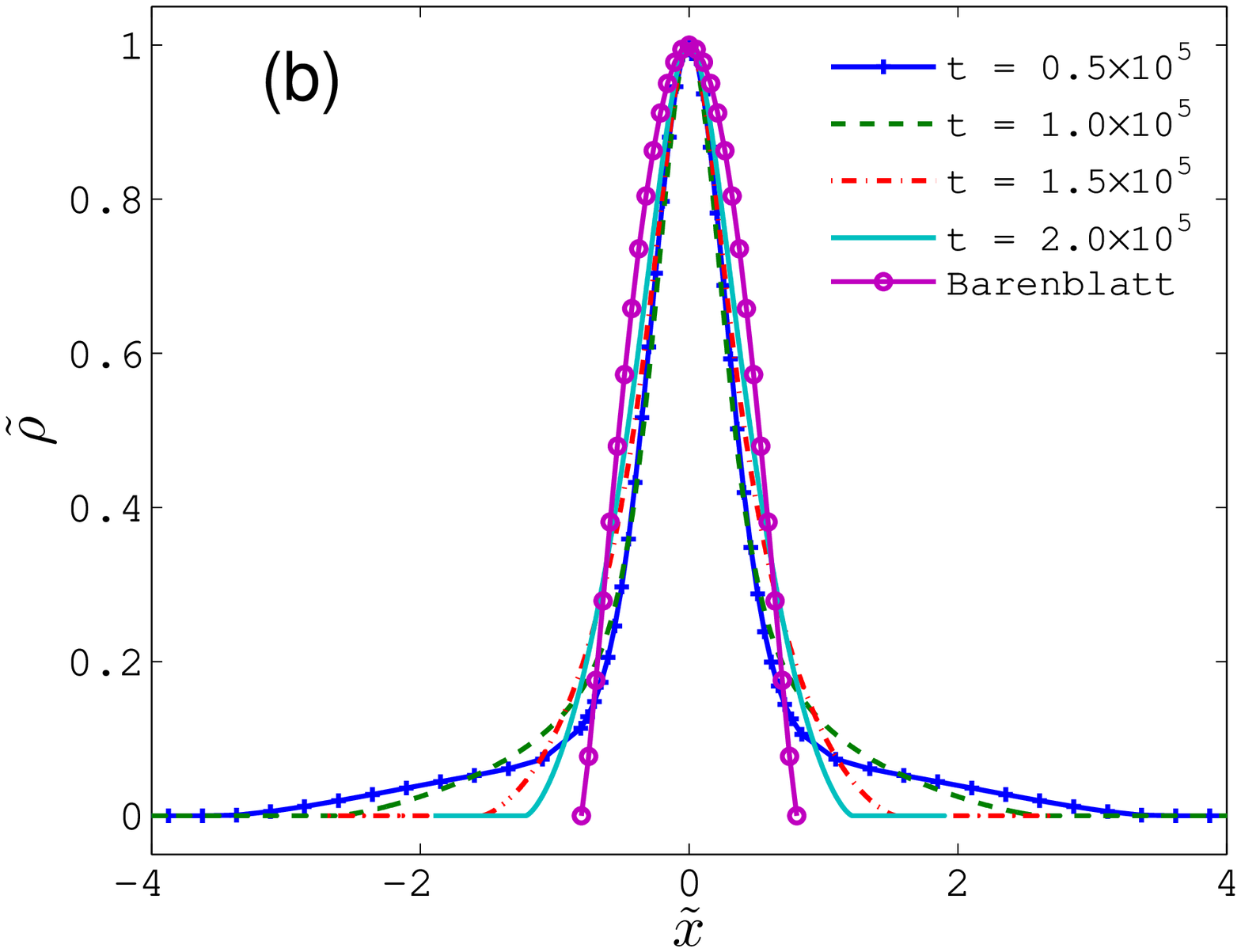}
 \end{center}
\caption{({\bf a}) Time decay of the energy corresponding to the simulations from Figure~\ref{fig:smallmevol}. ({\bf b}) Rescaled profiles of the spreading solution in  Figure~\ref{fig:smallmevol}(b) approach asymptotically a rescaled Barenblatt profile.}
\label{fig:engbaren}
\end{figure}

Compared with the case $m>2$, another fundamental difference 
when $m\in (1,2)$ is the fact that the energy corresponding
to the compactly supported steady state can be positive when $\nu$ is close to $\nu_\infty$.
Figure~\ref{fig:engbaren}(a) shows the evolution of the energy corresponding to the two dynamic simulations from Figure~\ref{fig:smallmevol}. 
The solution with small initial width converges to a {\em positive} energy,
while the solution with large initial width that decays to zero 
has energy that goes to zero as well. In particular, at $\nu=\nu_\infty$
the steady state is governed by~\eqref{eq:nleig} and $E[\rho_\infty] = 
(\frac{1}{m}-\frac{1}{2})\nu_\infty\int (\rho_\infty)^{m} >0$.
This observation once again confirms that for $\nu$ near the critical diffusion $\nu_\infty$, the compactly supported steady states can only be
{\em local} minimizers. It is possible that they turn into global minimizers once $\nu$ drops below a certain threshold (strictly smaller than $\nu_\infty$), but this aspect will be investigated elsewhere.

When the solution spreads and decays to zero, the dynamics is 
expected to be dominated by the nonlinear diffusion~\cite{Bed2010}.
For the decaying solution from Figure~\ref{fig:smallmevol}(b),
the rescaled profiles $\tilde{\rho}(\tilde{x},t) = 
\lambda^{-1}\rho(\lambda^{-1}\tilde{x},t)$ with $\lambda =
\max_x \rho(x,t)$ shown in Figure~\ref{fig:engbaren}(b) converge indeed to the rescaled Barenblatt profile, the solution of the porous medium equation $\rho_t = \nu \partial_x (\rho \partial_x \rho^{m-1})$.
The convergence however seems slow and is established 
only for very large time.


\appendix

\section{Proof of the last fact about equilibria in Section \ref{sect:statsol}}
\begin{lemma}
\label{lem:steady_sym}
Let $\rho$ be a stationary solution of \eqref{eq:main_evo} in one dimension, with compact support. Then there exists a symmetric stationary solution $\rhot$ such that
\begin{equation*}
    E[\rhot] = E[\rho].
\end{equation*}
\end{lemma}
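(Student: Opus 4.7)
The plan is to use the symmetric decreasing rearrangement $\rho^*$ as the candidate $\tilde\rho$, combined with the reflection symmetry of the stationary equation. From the facts already collected in Section~\ref{sect:statsol}, $\rho$ has a connected compact support $[a,b]$ on which
\[ f(\rho(x)) = (G*\rho)(x) - C, \]
with $\rho(a)=\rho(b)=0$ and $C = (G*\rho)(a)$.

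First I would observe that the reflected density $\bar\rho(x) := \rho(a+b-x)$ also satisfies this integral equation, with the same $C$, because $G$ is even; a direct change of variables further shows $E[\bar\rho] = E[\rho]$, since both $\int F(\rho)$ and $\iint G(x-y)\rho(x)\rho(y)\,dx\,dy$ are invariant under reflection. Thus reflection already produces a second stationary solution of the same energy, supported on the same interval.

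Next I would take $\tilde\rho := \rho^*$, the symmetric decreasing rearrangement of $\rho$. By equimeasurability $\int F(\tilde\rho) = \int F(\rho)$, and by Riesz's rearrangement inequality applied to the radial decreasing kernel $G$,
\[ \iint G(x-y)\,\tilde\rho(x)\,\tilde\rho(y)\,dx\,dy \;\geq\; \iint G(x-y)\,\rho(x)\,\rho(y)\,dx\,dy, \]
so $E[\tilde\rho]\le E[\rho]$, with equality in Riesz holding precisely when $\rho$ is a translate of $\rho^*$, i.e.\ when $\rho$ is already symmetric decreasing. The plan is to close this inequality to an equality by showing $\rho = \bar\rho$: then $\rho$ is symmetric about $c = (a+b)/2$, and $\tilde\rho(x) := \rho(x+c)$ is a symmetric stationary solution with $E[\tilde\rho] = E[\rho]$ by translation invariance of $E$. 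To prove $\rho = \bar\rho$, set $\phi := \rho - \bar\rho$; subtracting the stationary equations for $\rho$ and $\bar\rho$ and using the mean-value identity $f(\rho) - f(\bar\rho) = h(x)\,\phi(x)$ with $h(x) = \int_0^1 f'(\theta\rho + (1-\theta)\bar\rho)\,d\theta \geq 0$ yields the linear integral equation $h\,\phi = G*\phi$ on $[a,b]$, with $\phi(a) = \phi(b) = 0$ and $\int \phi = 0$ (since $\rho$ and $\bar\rho$ have the same mass). Krein--Rutman-type arguments in the spirit of Section~\ref{sect:ss-pf} and Lemma~\ref{comparisonlemma}, combined with the sign constraint forced by $\int\phi = 0$, should force $\phi \equiv 0$.

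The main obstacle I expect is precisely this uniqueness step. For $m=2$ the equation is linear from the outset and one can invoke the argument of~\cite{BurgerDiFrancescoFranek} directly, but for general $m>1$ the coefficient $h$ degenerates at the support boundary (where $\rho$ and $\bar\rho$ both vanish), so a naive Krein--Rutman application fails and one must either regularize as in the existence proofs of Section~\ref{sect:ss-pf} (replacing $\rho, \bar\rho$ by $\rho+\delta, \bar\rho+\delta$ and passing to the limit), or fall back on Riesz and verify its equality case through the stationary equation itself.
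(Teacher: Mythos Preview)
Your proposal has a genuine gap at its center: the crucial step is proving $\rho=\bar\rho$, i.e.\ that every compactly supported stationary solution is already symmetric about the midpoint of its support. This is a uniqueness-type statement considerably stronger than what the lemma asks for, and the paper explicitly does \emph{not} establish it (cf.\ the remark in Section~\ref{subsect:intnum} that ``uniqueness of equilibria is not guaranteed by the analytical considerations''). Your suggested route---deriving $h\,\phi=G*\phi$ with $\int\phi=0$ and invoking Krein--Rutman---cannot work as stated: $\phi$ is forced to change sign and hence lies outside any solid positive cone, so neither Theorem~\ref{thm:KR} nor Lemma~\ref{comparisonlemma} applies. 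The endpoint degeneracy of $h$ that you flag only compounds the difficulty. The Riesz portion of your plan is also idle: once you commit to proving $\rho=\bar\rho$ and setting $\tilde\rho(x)=\rho(x+c)$, the rearrangement $\rho^*$ never enters, and the Riesz inequality (which gives only $E[\rho^*]\le E[\rho]$) cannot be upgraded to equality without exactly the symmetry you are trying to prove.

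The paper takes a completely different, \emph{constructive} route that sidesteps uniqueness altogether. After translating so that the support is the symmetric interval $[-(b-a)/2,(b-a)/2]$, it defines $\tilde\rho$ by averaging in the $f$-variable,
\[
f(\tilde\rho(x))=\tfrac12\bigl(f(\bar\rho(x))+f(\bar\rho(-x))\bigr),
\]
and then checks directly---using only the evenness of $G$ and the stationary equation for $\bar\rho$---that $\tilde\rho$ again satisfies the stationary equation with the same constant $C$. Since the lemma asks only for \emph{some} symmetric stationary solution with the same energy, no comparison between $\rho$ and its reflection is required. This explicit symmetrization is the idea your proposal is missing.
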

\proof
Let $\mathrm{supp}[\rho]=[a,b]$ for some $a,b\in\R$. For a given $x\in(a,b)$ we have
\begin{equation}\label{eq:stat_rho}
    \e f(\rho(x)) = G*\rho ( x) + C
\end{equation}
for some $C\in \R$. Evaluation on $x=a$ and $x=b$ gives
\begin{equation*}
    C=-\int_a^b G(a-y)\rho(y) dy = -\int_a^b G(b-y)\rho(y) dy.
\end{equation*}
Let $\rhob(x)=\rho(x+x_0)$ with $x_0=(a+b)/2$. Then $\rhob$ is still a steady state and it satisfies $E[\rhob]=E[\rho]$ due to translation invariance. Moreover, the support of $\rhob$ is symmetric. Let us introduce
\begin{equation*}
    \rhot(x):=f^{-1}\left( \frac{1}{2}(f(\rhob)(x) + f(\rhob)(-x)) \right) .
\end{equation*}
Clearly, $\mathrm{supp}[\rhot]=\mathrm{supp}[\rhob]$ and we have, for all $x\in\mathrm{supp}[\rhot]$,
\begin{align*}
    & \e f(\rhot(x)) =\frac{\e}{2}(f(\rhob(x)) + f(\rhob(-x))) = \frac{\e}{2}(f(\rho(x+x_0)) + f(\rho(-x + x_0)))\\
    & \ = \frac{1}{2}\int_{a}^{b}G(x+x_0-y)\rho(y) dy + \frac{1}{2}\int_{a}^{b}G(-x+x_0-y)\rho(y) dy + C \\
    & \ = \frac{1}{2}\int_{(a-b)/2}^{(b-a)/2}G(x-z)\rhob(z) dy + \frac{1}{2}\int_{(a-b)/2}^{(b-a)/2}G(-x-z)\rhob(z) dy + C\\
    & \ = \frac{1}{2}\int_{(a-b)/2}^{(b-a)/2}G(x-z)\rhob(z) dy + \frac{1}{2}\int_{(a-b)/2}^{(b-a)/2}G(x-z)\rhob(-z) dy + C\\
    & \ = \int_{(a-b)/2}^{(b-a)/2}G(x-z)\frac{1}{2}\left(\rhob(z) + \rhob(-z)\right) dz + C = \int_{(a-b)/2}^{(b-a)/2}G(x-z)\rhot(z) dz + C
\end{align*}
where we have used the symmetry of $G$. The above computation shows that $\rhot$ has the same energy as $\rho$ .
\endproof


\begin{thebibliography}{10}

\bibitem{ambrosiogiglisavare}
L.~Ambrosio, N.~Gigli, and G.~Savar{\'e}.
\newblock {\em Gradient flows in metric spaces and in the space of probability
  measures}.
\newblock Lectures in Mathematics ETH Z\"urich. Birkh\"auser Verlag, Basel,
  second edition, 2008.

\bibitem{BatesXun94}
P.~Bates and J.~Xun.
\newblock Metastable patterns for the {C}ahn-{H}illiard equation {I}.
\newblock {\em J. Differential Equations}, 111(2):421--457, 1994.

\bibitem{Bedrossian}
J.~Bedrossian.
\newblock Global minimizers for free energies of subcritical aggregation
  equations with degenerate diffusion.
\newblock {\em Appl. Math. Lett.}, 24(11):1927--1932, 2011.

\bibitem{Bed2010}
J.~Bedrossian.
\newblock Intermediate asymptotics for critical and supercritical aggregation
  equations and {P}atlak-{K}eller-{S}egel models.
\newblock {\em Commun. Math. Sci.}, 9(4):1143--1161, 2011.

\bibitem{BeRoBe2011}
J.~Bedrossian, N.~Rodr\'{i}guez, and A.~L. Bertozzi.
\newblock Local and global well-posedness for aggregation equations and
  {P}atlak-{K}eller-{S}egel models with degenerate diffusion.
\newblock {\em Nonlinearity}, 24:1683--1714, 2011.

\bibitem{MR695535}
H.~Berestycki and P.-L. Lions.
\newblock Nonlinear scalar field equations. {I}. {E}xistence of a ground state.
\newblock {\em Arch. Rational Mech. Anal.}, 82(4):313--345, 1983.

\bibitem{BeTo2011}
A.~J. Bernoff and C.~M. Topaz.
\newblock A primer of swarm equilibria.
\newblock {\em SIAM J. Appl. Dyn. Syst.}, 10(1):212--250, 2011.

\bibitem{BertozziSlepcev10}
A.~L. Bertozzi and D.~Slep\v{c}ev.
\newblock Existence and uniqueness of solutions to an aggregation equation with
  degenerate diffusion.
\newblock {\em Commun. Pure Appl. Anal.}, 9(6):1617--1637, 2010.

\bibitem{BlCaLa09}
A.~Blanchet, J.~A. Carrillo, and P.~Lauren\c{c}ot.
\newblock Critical mass for a {P}atlak-{K}eller-{S}egel model with degenerate
  diffusion in higher dimensions.
\newblock {\em Calc. Var. Partial Differential Equations}, 35(2):133--168,
  2009.

\bibitem{Burger:Capasso}
M.~Burger, V.~Capasso, and D.~Morale.
\newblock On an aggregation model with long and short range interactions.
\newblock {\em Nonlinear Analysis: Real World Applications}, 8:939--958, 2007.

\bibitem{Burger:DiFrancesco}
M.~Burger and M.~Di~Francesco.
\newblock Large time behavior of nonlocal aggregation models with nonlinear
  diffusion.
\newblock {\em Netw. Heterog. Media}, 3(4):749--785, 2008.

\bibitem{BurgerDiFrancescoFranek}
M.~Burger, M.~Di~Francesco, and M.~Franek.
\newblock Stationary states of quadratic diffusion equations with long-range
  attraction.
\newblock {\em Comm. Math. Sci.}, 3, 2013.
\newblock To appear.

\bibitem{MR2397995}
M.~Burger, Y.~Dolak-Struss, and C.~Schmeiser.
\newblock Asymptotic analysis of an advection-dominated chemotaxis model in
  multiple spatial dimensions.
\newblock {\em Commun. Math. Sci.}, 6(1):1--28, 2008.

\bibitem{calvez2012refined}
V.~Calvez and J.~A. Carrillo.
\newblock Refined asymptotics for the subcritical keller-segel system and
  related functional inequalities.
\newblock {\em Proceedings of the American Mathematical Society},
  140(10):3515--3530, 2012.

\bibitem{CCH}
J.~Carrillo, A.~Chertock, and Y.~Huang.
\newblock A finite volume method for nonlinear diffusion equations with a
  gradient flow structure.
\newblock 2013.
\newblock In preparation.

\bibitem{carrillo2000asymptotic}
J.~A. Carrillo and G.~Toscani.
\newblock Asymptotic l1-decay of solutions of the porous medium equation to
  self-similarity.
\newblock {\em Indiana University Mathematics Journal}, 49(1):113--142, 2000.

\bibitem{DolakSchmeiser05}
Y.~Dolak and C.~Schmeiser.
\newblock The {K}eller-{S}egel model with logistic sensitivity function and
  small diffusivity.
\newblock {\em SIAM J. Appl. Math.}, 66(1):286--308, 2005.

\bibitem{Engl}
H.~W. Engl.
\newblock {\em Integralgleichungen}.
\newblock Springer Lehrbuch Mathematik. [Springer Mathematics Textbook].
  Springer-Verlag, Vienna, 1997.

\bibitem{GST}
S.~Gottlieb, C.-W. Shu, and E.~Tadmor.
\newblock Strong stability-preserving high-order time discretization methods.
\newblock {\em SIAM Rev.}, 43:89--112, 2001.

\bibitem{Grunbaum:msag}
D.~Gr\"{u}nbaum and A.~Okubo.
\newblock Modelling social animal aggregations.
\newblock In S.~A. Levin, editor, {\em Frontiers in mathematical biology,
  Lecture notes in biomathematics 100}, pages 296--325. Springer-Verlag, Berlin
  Heidelberg, 1994.

\bibitem{MR736508}
M.~E. Gurtin and A.~C. Pipkin.
\newblock A note on interacting populations that disperse to avoid crowding.
\newblock {\em Quart. Appl. Math.}, 42(1):87--94, 1984.

\bibitem{Ikeda1985}
T.~Ikeda.
\newblock Standing pulse-like solutions of a spatially aggregating population
  model.
\newblock {\em Jpn. J. Appl. Math.}, 2:111--149, 1985.

\bibitem{IkedaNagai1987}
T.~Ikeda and T.~Nagai.
\newblock Stability of localized stationary solutions.
\newblock {\em Jpn. J. Appl. Math.}, 4:73--97, 1987.

\bibitem{KozonoSugiyama08}
H.~Kozono and Y.~Sugiyama.
\newblock Local existence and finite time blow-up of solutions in the 2-{D}
  {K}eller-{S}egel system.
\newblock {\em J. Evol. Equ.}, 8:353--378, 2008.

\bibitem{LeToBe2009}
A.~J. Leverentz, C.~M. Topaz, and A.~J. Bernoff.
\newblock Asymptotic dynamics of attractive-repulsive swarms.
\newblock {\em SIAM J. Appl. Dyn. Syst.}, 8(3):880--908, 2009.

\bibitem{MR2062430}
H.~Li and G.~Toscani.
\newblock Long-time asymptotics of kinetic models of granular flows.
\newblock {\em Arch. Ration. Mech. Anal.}, 172(3):407--428, 2004.

\bibitem{MR1812873}
P.~A. Markowich and C.~Villani.
\newblock On the trend to equilibrium for the {F}okker-{P}lanck equation: an
  interplay between physics and functional analysis.
\newblock {\em Mat. Contemp.}, 19:1--29, 2000.
\newblock VI Workshop on Partial Differential Equations, Part II (Rio de
  Janeiro, 1999).

\bibitem{MR1698215}
A.~Mogilner and L.~Edelstein-Keshet.
\newblock A non-local model for a swarm.
\newblock {\em J. Math. Biol.}, 38(6):534--570, 1999.

\bibitem{MR2117406}
D.~Morale, V.~Capasso, and K.~Oelschl{\"a}ger.
\newblock An interacting particle system modelling aggregation behavior: from
  individuals to populations.
\newblock {\em J. Math. Biol.}, 50(1):49--66, 2005.

\bibitem{NagaiMimura1983}
T.~Nagai and M.~Mimura.
\newblock Asymptotic behavior for a nonlinear degenerate diffusion equation in
  population dynamics.
\newblock {\em SIAM J. Appl. Math.}, 43:449--464, 1983.

\bibitem{slepcev2008coarsening}
D.~Slepcev.
\newblock Coarsening in nonlocal interfacial systems.
\newblock {\em SIAM Journal on Mathematical Analysis}, 40(3):1029--1048, 2008.

\bibitem{TBL}
C.~M. Topaz, A.~L. Bertozzi, and M.~A. Lewis.
\newblock A nonlocal continuum model for biological aggregation.
\newblock {\em Bull. Math. Bio.}, 68:1601--1623, 2006.

\end{thebibliography}

\end{document}